\newcommand{\abs}[1]{\left\lvert#1\right\rvert}
\newtheorem{theorem}{Theorem}
\newtheorem{lemma}[theorem]{Lemma}
\newtheorem{proposition}[theorem]{Proposition}
\theoremstyle{definition}
\newtheorem{definition}[theorem]{Definition}
\theoremstyle{remark}
\newtheorem{remark}[theorem]{Remark}
\begin{document}

\title{An algebraic reduction of the `scaling gap' in the Navier-Stokes
regularity problem}

\author{Zachary Bradshaw}
\address{Department of Mathematics, University of Arkansas}
\email{zb002@uark.edu}

\author{Aseel Farhat}
\address{Department of Mathematics, Florida State University}
\email{aseelfarhat842@gmail.com}

\author{Zoran Gruji\'c}
\address{Department of Mathematics, University of Virginia}
\email{zg7c@virginia.edu}

\date{\today}

\begin{abstract}
It is shown--within a mathematical framework based on the suitably defined scale of sparseness
of the super-level sets of the positive and negative parts of the vorticity components, and in the context
of a blow-up-type argument--that the 
ever-resisting `scaling gap', i.e., the scaling distance between a regularity criterion and a corresponding \emph{a priori} bound
(shortly, a measure of the super-criticality of the 3D Navier-Stokes regularity problem), can be reduced by an \emph{algebraic 
factor}; since (independent) fundamental works of Ladyzhenskaya, Prodi and Serrin as well as Kato and
Fujita in 1960s, all the reductions have been logarithmic in nature, regardless of the functional 
set up utilized. More precisely, it is shown that it is possible to obtain an \emph{a priori}
bound that is algebraically better than the energy-level bound, while keeping the corresponding regularity criterion at the
same level as all the classical regularity criteria. The mathematics presented was inspired by 
morphology of the regions of intense vorticity/velocity gradients observed in computational simulations 
of turbulent flows, as well as by the physics of turbulent cascades and turbulent dissipation.
\end{abstract}

\maketitle

\vspace{2in}

\section{Prologue}

The Navier-Stokes equations (NSE) describing the motion of three-dimensional (3D) incompressible,
viscous, Newtonian fluids read

\begin{equation}\label{up}
 \partial_t u + (u \cdot \nabla) u =  \nu \triangle u - \nabla p + f,
\end{equation}

supplemented with the incompressibility condition $\nabla \cdot u = 0$. Here, the vector field $u$ 
represents the velocity, the scalar field $p$ the pressure, a positive constant $\nu$ is the viscosity,
and the vector field $f$ an external force. 

\medskip

Henceforth, for simplicity of the exposition, $\nu$ will be normalized to 1, $f$ taken to 
be zero, and the spatial domain to be the whole space $\mathbb{R}^3$ (since
this is an irreversible system, the temporal variable lives in the interval $(0, \infty)$). In this
setting, the only relevant piece of data is the initial velocity configuration $u_0$.

\medskip

Mathematical theory of the 3D NSE emerged from the groundbreaking work of Leray in the 1930s
\cite{Leray1, Leray2, Leray3} in which he established global-in-time existence of 
weak solutions emanating from
the initial data with arbitrary large energy,
and charted the paths for the study of local-in-time and global-in-time existence and uniqueness of
strong solutions within the realms of arbitrary large and appropriately small initial data, respectively.

\medskip

A central problem in the theory ever since has been the possibility of singularity formation in the
system, and could be considered either from the point of view of possible singularity formation in a weak
solution or from the point of view of possible finite time blow-up of a local-in-time strong solution. 
This problem is usually referred to as the Navier-Stokes regularity problem.

\medskip

The Navier-Stokes regularity problem is supercritical, i.e., there is a `scaling gap'--with respect
to the unique scaling transformation realizing the scaling-invariance of the equations--between any known regularity criterion and
the corresponding \emph{a priori} bound. (Note that if $(u, p)$ is a solution to the 3D NSE on
$\mathbb{R}^3 \times (0, \infty)$, then for any $\lambda > 0$, the rescaled pair $(u_\lambda, p_\lambda)$
given by $u_\lambda(x,t) = \lambda u(\lambda x, \lambda^2 t)$ and
$p_\lambda(x,t) = \lambda^2 p(\lambda x, \lambda^2 t)$ is also a solution; in addition, this is the only such transformation.)

\medskip

If we focus on $L^\infty((0,T), X)$-type spaces,
perhaps the most illustrative example is in the $L^p$-scale where the regularity class is
$L^\infty((0,T), L^3)$ (\cite{ESS03}; {for generalizations see \cite{GKP,Phuc}}), and the
corresponding \emph{a priori} bound the Leray bound $L^\infty((0,T), L^2)$. Moreover, since 1960s, the scaling
gap has been of the fixed `size' in the sense that all the regularity classes are (at best) 
scaling-invariant while--in contrast--all the known \emph{a priori} bounds are
on the scaling level of the Leray bound.

\medskip

Recall that arguably the most classical (scaling invariant) regularity criteria are the Ladyzhenskaya-Prodi-Serrin
conditions given by
$u \in L^\frac{2g}{q-3}((0,T), L^q)$, for $3 < q \le \infty$, the other endpoint being the aforementioned
Escauriaza, Seregin, and Sverak class $L^\infty((0,T), L^3)$. In the vorticity realm, this designation goes to
the Beale, Kato, and Majda (BKM)-criterion (\cite{BKM84}),
$\omega \in L^1((0,T), L^\infty)$ where $\omega$ is the vorticity of the fluid given by 
$\omega = \, \mbox{curl} \, u$.

\medskip

One might try to collect additional clues about possible singularity formation in the 3D NSE
from the geometry of turbulent flows. One of the most prominent morphological signatures
of this geometry--revealed in high-resolution computational simulations of high
(and even moderate; see \cite{SchSree}) Reynolds number-flows-- is spatial intermittency of 
the regions of intense vorticity and velocity gradients.

\medskip

In order to better understand geometry of the flow mathematically, it is beneficial to
study the vorticity-velocity formulation of the 3D NSE,

\begin{equation}\label{ou}
 \partial_t \omega + (u \cdot \nabla) \omega =  \triangle \omega + (\omega \cdot \nabla) u.
\end{equation} 

The incompressibility implies 
that
$u$ can be reconstructed from $\omega$ by solving $\triangle u = - \ \mbox{curl} \ \omega$, leading
to the Biot-Savart law, and closing the system. 

\medskip

Let us start by noting that conceivably a first work in mathematical analysis of the 3D NSE that
was motivated by geometry of the flow was a work by Giga and Miyakawa \cite{GiMi} establishing
global-in-time existence of small, measure-valued solutions modeling the vortex filaments (the
space utilized is a scaling-invariant Morrey-Campanato space).

\medskip

A conceptually related program aimed at mathematical modeling and analysis of coherent structures
in fluid flows via the study of oscillating and localized solutions to the NSE was initiated by 
Meyer and his students (Brandolese, Cannone, Lemarie-Rieusset, Planchon, ...); this brought in
an array of scale-sensitive harmonic analysis tools such as wavelets, atomic decomposition and 
Littlewood-Paley analysis. A chapter in C.I.M.E. lecture notes by Meyer \cite{Mey}
provides an insight into many of the main ideas and results related to this program.

\medskip

The vortex-stretching term, $(\omega \cdot \nabla) u$, is responsible for a possibly unbounded 
growth of the vorticity magnitude in 3D; mechanically, the process of vortex stretching in conjunction
with incompressibility of the fluid and conservation of the angular momentum amplifies the
vorticity, formally, if we erase the vortex-stretching term in the equations, and the only nonlinear
term left is the advection term $(u \cdot \nabla) \omega$, an \emph{a
priori} bound on the $L^2$-norm of the vorticity--which suffices to bootstrap to infinite
smoothness--follows in one line ($(\omega \cdot \nabla) u$ is identically zero in 2D).

However, some caution is advised in labeling the vortex-stretching term as a clear-cut `bad guy'. While
the process of vortex-stretching does amplify the vorticity, it also produces locally anisotropic
small scales in the flow (diameter of a vortex tube, thickness of a vortex sheet, \emph{etc.}), and
this gives a chance to some form of a locally anisotropic diffusion to engage just on time to prevent
the further amplification of the vorticity magnitude and possible singularity formation. As a matter of fact, 
Taylor \cite{Tay37} concluded
his paper from the 1930s with the following thought on turbulent dissipation:

\medskip

``It seems that the stretching of vortex filaments must be regarded as the principal mechanical cause
of the high rate of dissipation which is associated with turbulent motion.''

\medskip

The stretching is also chiefly responsible for the phenomenon of local coherence of the vorticity
direction prominent in coherent vortex structures (e.g., in vortex tubes) which can be viewed as a local 
manifestation of the quasi low-dimensionality of the regions of intense fluid activity.
This observation led to the rigorous mathematical study
of locally anisotropic dissipation in the 3D NSE originating in the pioneering work by Constantin \cite{Co94} 
where he presented a singular integral representation 
of the stretching factor in the evolution of the vorticity magnitude, featuring a geometric
kernel depleted precisely by local coherence of the vorticity direction. This has been referred to
as `geometric depletion of the nonlinearity', and was a key in the proof of a fundamental result by
Constantin and Fefferman \cite{CoFe93}, stating that as long as the vorticity direction (in the regions of
intense vorticity) is Lipschitz-coherent, no finite time blow-up can occur.

The Lipschitz-coherence was later scaled down to $\frac{1}{2}$-H\"older \cite{daVeigaBe02}, followed up
by a complete spatiotemporal localization to an arbitrary small parabolic cylinder 
$B(x_0,2R) \times (t_0-(2R)^2, t_0)$
\cite{Gr09}, and 
the construction of scaling-invariant, local, hybrid geometric-analytic regularity classes in which 
the coherence of the vorticity direction is balanced against the growth of the vorticity magnitude
\cite{GrGu10-1}. In particular, denoting the vorticity direction by $\xi$, the following scaling-invariant 
regularity class is included,

\[
 \int_{t_0-(2R)^2}^{t_0} \int_{B(x_0,2R)} |\omega(x,t)|^2
 \rho_{\frac{1}{2}, 2R}^2 (x,t) \, dx \, dt < \infty
\]

where

\[
 \rho_{\gamma, r}(x,t) = \sup_{y \in B(x,r), y \neq x} \frac{\Bigl|\sin
 \angle \Bigl(\xi(x,t), \xi(y,t)\Bigr)\Bigr|}{|x-y|^\gamma};
\]

this is to be contrasted to an \emph{a priori} bound
obtained in \cite{Co90},

\[
 \int_0^T \int_{\mathbb{R}^3} |\omega(x,t)| |\nabla \xi (x,t)|^2 \,
 dx \, dt \le \frac{1}{2} \int_{\mathbb{R}^3} |u_0(x)|^2 \
 dx,
\]

which exhibits the same scaling as the Leray class $L^\infty((0,T), L^2)$;
another manifestation of the scaling gap.

\medskip

Let us return to the spatial intermittency of the regions of intense vorticity, more specifically,
to `sparseness' of the vorticity super-level sets.
This type of morphology motivated a recent work \cite{Gr13}
where it was shown that as long as the vorticity super-level sets, cut at a fraction of
the $L^\infty$-norm, exhibit a suitably defined property of sparseness at the scale
comparable to the radius of spatial analyticity measured in $L^\infty$,
no finite time blow-up can occur. (The proof was presented for the velocity formulation,
and the argument for the vorticity formulation is completely analogous.)

The concept of sparseness needed in the proof is essentially local existence of a
sparse direction (the direction is allowed to depend on the point) at the scale of
interest, and it will be referred to as `1D sparseness'. The precise definitions of 
1D sparseness and a related concept of `3D sparseness' can be found at the 
beginning of section 3, as well as a remark on their relationship. In the rest of this
section, sparseness will refer to 3D sparseness; we copy the definition 
below for convenience of the reader ($m^3$ denotes the 3-dimensional
Lebesgue measure).

Let $S$ be an open subset of $\mathbb{R}^3$, $\delta \in (0,1)$, $x_0$ a point in $\mathbb{R}^3$,
and $r \in (0, \infty)$. $S$ is {\it 3D $\delta$-sparse around $x_0$ at scale $r$} if 

\[
\frac{m^3\bigl(S\cap B(x_0,r)\bigr)}{m^3\bigl(B(x_0,r)\bigr)} \leq \delta.
\]

Since we are interested in the super-level sets of a vector field cut at a fraction of the
$L^\infty$-norm, at the scale comparable to some inverse power of the $L^\infty$-norm,
there are several parameters floating around: the fraction at which we cut 
(call it $\lambda$), the ratio of sparseness $\delta$ and a constant multiplying the inverse
power of the $L^\infty$-norm (call it $\frac{1}{c}$). More precisely, the class of functions
of interest, $X_\alpha$, is defined as follows.

\begin{definition}
For a positive exponent $\alpha$, and a selection of parameters $\lambda$ in $(0,1)$, $\delta$ in $(0,1)$
and $c_0>1$, the class of functions
$X_\alpha(\lambda, \delta; c_0)$ consists of bounded, continuous functions
$f : \mathbb{R}^3 \to \mathbb{R}^3$ subjected to the requirement that the super-level set

\begin{equation}\label{a1}
\biggl\{ x \in \mathbb{R}^3: \, |f(x)| > \lambda \|f\|_\infty\biggr\}
\end{equation}

is $\delta$-sparse around any point $x_0$ at scale  $\frac{1}{c} \frac{1}{\|f\|_\infty^\alpha}$,
for some $c, \frac{1}{c_0} \le c \le c_0$.
\end{definition}

It is instructive to make a brief comparison of the scale of classes $X_\alpha$ with the scale of
Lorentz $L^{p, \infty}$ spaces defined by the requirement that the distribution function of $f$,
the Lebesgue measure of the super-level set cut at level $\beta$, decays at least as 
the inverse $p$-power of $\beta$. There are two key differences:

(i) the definition of $X_\alpha$ involves a specific type of the super-level sets, cut at a fraction 
of the $L^\infty$-norm,
while the definition of $L^{p, \infty}$ involves all levels $\beta$,

(ii) the $L^{p, \infty}$ spaces are blind to any geometric properties
of the super-level sets, and measure only the rate of decay of the volume, while the classes $X_\alpha$
can detect the scale of sparseness of the super-level sets of the highest interest.

\begin{remark}
It is worth noting that the same phenomenon will occur in all rearrangement-invariant (with respect
to the Lebesgue measure) function classes, i.e., the classes that do not differentiate between the
functions having identical distribution functions. Typical examples are Lebesgue classes
$L^p$, general Lorentz classes $L^{p, q}$ and Orlicz classes $L^\phi$.
\end{remark}

Taking a closer look, on one hand, it is plain that

\begin{equation}\label{s_r}
 f \in L^{p, \infty} \implies f \in X_\alpha \ \ \mbox{for} \ \ \alpha=\frac{p}{3}
\end{equation}

(for a given selection of $\lambda$ and $\delta$, the tolerance parameter $c_0$ will depend
on the $L^{p, \infty}$-norm of $f$),
on the other hand, in the geometrically worst case scenario (from the point of view
of sparseness), the whole super-level set being clumped into a single ball, being in $X_\alpha$
is consistent with being in $L^{3 \alpha, \infty}$.

This provides a simple
`conversion rule' between the two scales, $\alpha = \frac{p}{3}$, determining the scaling 
signature of the classes $X_\alpha$.

Let us decode the scaling distance between the regularity criterion established
in \cite{Gr13} (in the vorticity formulation), and any corresponding \emph{a priori} bounds.

Recall that the scale of sparseness required is comparable to the lower bound 
on the radius of spatial analyticity evaluated at a suitable time $t$ prior to (possible) blow-up, namely,
$\frac{1}{c} \frac{1}{\|\omega(t)\|^\frac{1}{2}_\infty}$,
i.e., one needs $\omega$ in $X_\frac{1}{2}$.

In contrast, the best \emph{a priori} bound available is $\omega$ in $X_\frac{1}{3}$;
this follows simply from the \emph{a priori} bound on the $L^1$-norm of the vorticity (\cite{Co90}) and
Chebyshev's inequality (an $L \log L$-bound on the vorticity is available under a very weak 
assumption that the vorticity direction is in a local, logarithmically-weighted $BMO$ space that allows for
discontinuities \cite{BrGr13-2}).

Applying the scaling conversion rule, $X_\frac{1}{2}$ corresponds to $L^{\frac{3}{2}, \infty}$ and
$X_\frac{1}{3}$ to $L^{1, \infty}$; since these are precisely the endpoint classes within the 
$L^{p, \infty}$-scale in the vorticity formulation, we arrive at yet another manifestation of
the scaling gap (also, not surprising as the 
derivation of the \emph{a priori} bound, $\omega$ in $X_\frac{1}{3}$, uses no geometric 
tools).

\medskip

Back to the drawing board. Recall that computational
simulations of turbulent flows indicate a high degree of local anisotropy present in the vorticity field,
manifested through the formation of various coherent vortex structures (e.g., vortex 
sheets and vortex tubes).

A simple but key observation is that a vector field exhibiting a high
degree of local anisotropy is more likely to allow for a considerable discrepancy
in sparseness between the full vectorial super-level sets and the super-level sets of the
components. As a matter of fact, it is easy to construct locally anisotropic smooth vector fields for which
the former are not sparse at any scale, while the latter can feature any predetermined 
scale of sparseness.

This is our primary motivation for replacing the scale of classes $X_\alpha$ by the
scale of classes $Z_\alpha$ based on sparseness of the super-level
sets of the locally selected positive and negative parts of the vectorial components $f_i$,
$f_i^\pm$, for $i=1, 2, 3$.

\begin{definition}
For a positive exponent $\alpha$, and a selection of parameters $\lambda$ in $(0,1)$, $\delta$ in $(0,1)$
and $c_0>1$, the class of functions
$Z_\alpha(\lambda, \delta; c_0)$ consists of bounded, continuous functions
$f : \mathbb{R}^3 \to \mathbb{R}^3$ subjected to the following local condition. For $x_0$ in $\mathbb{R}^3$,
select the/a component $f_i^\pm$ such that $f_i^\pm(x_0) = |f(x_0)|$
(henceforth, the norm of a vector $v=(a, b, c)$, $|v|$, will be computed as $\max \{|a|, |b|, |c|\}$),
and require that the set

\[
 \biggl\{ x \in \mathbb{R}^3: \, f_i^\pm(x) > \lambda \|f\|_\infty\biggr\}
\]

is $\delta$-sparse around $x_0$ at scale $\frac{1}{c} \frac{1}{\|f\|_\infty^\alpha}$,
for some $c, \frac{1}{c_0} \le c \le c_0$. Enforce this for all $x_0$ in $\mathbb{R}^3$.
\end{definition}

\begin{remark}
Shortly, we require (local) sparseness of the/a locally maximal component only.
\end{remark}

(The scaling conversion rule remains the same, $\alpha = \frac{p}{3}$.)

{
\begin{table}
{
\begin{center}
\begin{tabular}{ | p{5.5cm} | p{5.5cm} | p{3.8cm} |}
\hline
 ~&~&~
\\
Regularity class
& \emph{A priori} bound  
& Energy class 
\\  ~&~&~ \\ \hline ~&~&
\\ $u\in L^\infty(0,T;L^3)$
& $u\in L^\infty(0,T;L^2)$ 
& $u\in L^\infty(0,T;L^2)$ 
\\  ~&~& \\ \hline ~&~&
\\ $\omega \in L^\infty(0,T;L^\frac 3 2)$ 
& $\omega\in L^\infty(0,T;L^1)$
& $\omega\in L^\infty(0,T;L^1)$ 
\\  ~&~& \\ \hline ~&~&
\\ $\omega(\tau) \in X_{\frac 1 2}$ for a particular $\tau<T$ 
&  $\omega(\tau) \in X_{\frac 1 3}$ a.e.~$\tau<T$
&  $\omega(\tau) \in X_{\frac 1 3}$ a.e.~$\tau<T$ 
\\  ~&~& \\ \hline ~&~&~
\\   $\omega(\tau) \in Z_{\frac 1 2}$ for a particular $\tau<T$ 
&  $\omega(\tau) \in Z_{\frac 2 5}$  for~$\tau<T$ whenever $\|\omega(\tau)\|_\infty$ is large enough
&  $\omega(\tau) \in Z_{\frac 1 3}$ a.e.~$\tau<T$ 
\\ ~  ~ & ~ & ~
\\\hline 
\end{tabular}
\bigskip \label{table}
\caption{The scaling distance between regularity and energy classes is consistent across the table. Generally, \emph{a priori} bounded quantities are in the energy class.  Remarkably, Theorem \ref{yay2} shows the \emph{a priori} bound in the $Z_\alpha$ class is \emph{algebraically better} than the energy class (the \emph{a priori} bound in $Z_\alpha$ is derived
in the context of a blow-up argument).}
\end{center}
}
\end{table}
}

Going back to the vorticity, since this is potentially a much weaker condition,
the two questions to examine are as follows.

(a) is it possible to obtain an analogous regularity criterion at the same scale 
of sparseness (compared to the full vectorial super-level sets), or at all?

(b) is it possible to establish an \emph{a priori} bound at a smaller scale 
of sparseness
(compared to the full vectorial case), in particular, in the sense of modifying the scaling
exponent $\alpha$ by an algebraic factor?

\medskip

Affirmative answers to (a) and (b) are presented in sections 3 (Theorem \ref{yay1}) 
and 4 (Theorem \ref{yay2}), respectively,
yielding a regularity criterion in $Z_\frac{1}{2}$ and an \emph{priori}
bound in $Z_\frac{2}{5}$ (with the identical selection of the parameters $\lambda$,
$\delta$ and $c_0$); since

\[
 \frac{1}{3} < \frac{2}{5} < \frac{1}{2},
\]

this represents an algebraic reduction of the scaling gap in the $Z_\alpha$-framework.

\medskip

For a quick scaling comparison, several pointwise-in-time classes and their respective bounds are 
listed in Table \ref{table}.

\begin{remark}
Note that in the geometrically worst case scenario (for sparseness), $\omega$
being in $Z_\frac{2}{5}$ is consistent with $\omega$ being in $L^{\frac{6}{5}, \infty}$ which
is--in turn--at the same scaling level as $L^\frac{6}{5}$. Hence, a `geometrically blind' 
scaling counterpart of our \emph{a priori} bound in $Z_\frac{2}{5}$ would be the 
bound $\omega$ in $L^\infty((0,T), L^\frac{6}{5})$ which is well beyond 
state-of-the-art ($\omega$ in $L^\infty((0,T), L^1)$ \cite{Co90}). 
A counterpart in the velocity realm would be the bound $u$ in $L^\infty((0,T), L^\frac{12}{5})$,
also well beyond state-of-the-art (this would represent an algebraic improvement of the Leray bound
$u$ in $L^\infty((0,T), L^2)$).

\end{remark}

\begin{remark}
One should remark that this is not the first time that the exponent $\frac{2}{5}$ appeared
in the study of formation of small scales in turbulent flows. In the context of the
3D inviscid dynamics, the BKM-type criteria reveal
that the formation of infinite spatial gradients/infinitesimally-small spatial scales is necessary for a
finite time blow-up. In particular (see a discussion in \cite{Co94}), if we consider the case of approximately
self-similar blow-up for the 3D Euler in the form
\[
 \omega(x,t) \approx \frac{1}{T-t} \Omega\biggl(\frac{x}{L(t)}\biggr),
\]
then the relevant BKM-type necessary condition for the blow-up reads
\[
 \int_0^T \bigl(L(t)\bigr)^{-\frac{5}{2}} \, dt = \infty,
\]
and if we focus on the algebraic-type dependence,
\[
 L(t)=L(0) \Bigl(1-\frac{t}{T}\Bigr)^p,
\]
then the blow-up can be sustained only when $p \ge \frac{2}{5}$. 
Furthermore, since by dimensional analysis
$\omega \approx \frac{1}{T-t}$, this scaling is fully consistent with
the vorticity being in $Z_\frac{2}{5}$ near a possible singular time
$T$, strongly hinting at the inviscid nature of $Z_\frac{2}{5}$.
\end{remark}

\begin{remark}
It is not inconceivable that--at least partially--the discrepancy between the exponents 
$\frac{2}{5}$ and $\frac{1}{3}$
can be viewed as an indirect measure of the degree of local anisotropy of the vorticity field
in the regions of intense fluid activity. It is worth noting that the same line of arguments,
applied to the velocity itself, does not yield any reduction of the scaling gap (\cite{FGL});
this is consistent with the picture painted by computational simulations (and fluid
mechanics) in which the regions of high velocity magnitude tend to be more isotropic
(recall that in a suitable statistical sense, the K41 turbulence phenomenology--based
on the velocity--is isotropic and homogeneous, while one way to rationalize the phenomenon
of intermittency in turbulence is precisely via the coherent vortex
structures).
\end{remark}

\bigskip

\section{Local-in-time spatial analyticity of the vorticity in $L^\infty$}

\medskip

One of the most compelling manifestations of diffusion in the 3D NSE is the instantaneous
spatially-analytic smoothing of the rough initial data. An explicit, sharp lower bound on the radius
of (spatial) analyticity of the solution emanating from an initial datum in $L^p$, for 
$3 < p < \infty$, was given in \cite{GrKu98}; the method--inspired by the so-called Gevrey class-method
introduced by Foias and Temam in \cite{FT}--was based on complexifying the
equations and tracking the boundary of the (locally-in-time expanding) domain of analyticity 
via solving a real system of PDEs satisfied by the real and the imaginary parts of the 
complexified solutions. 

Since the Riesz transforms are not bounded on $L^\infty$, to obtain an estimate in the case $p=\infty$ 
without a logarithmic correction requires
a different argument (on the real level), see, e.g., \cite{GIM, Ku2} and \cite{Gu}
in the real and the complex setting, respectively, in the realm of the velocity-pressure
formulation of the 3D NSE.

The $L^\infty$-argument within the vorticity-velocity description requires a modification, and we present 
a sketch here, including an estimate on the vortex-stretching term, for
completeness. What follows is a modification of the exposition given in \cite{Ku2}.
In addition to the initial vorticity $\omega_0$ being bounded, a suitable decay of  $\omega_0$ at infinity will 
be required (we chose $\omega_0$ in $L^2$ for
convenience); however, it is worth noting that this is a `soft assumption', i.e., there will be no quantitative 
dependence on $\|\omega_0\|_2$ in the proof.

\medskip

\begin{theorem}[real setting]

Let the initial datum $\omega_0$ be in $L^2 \cap L^\infty$.
Then there exists a unique mild solution $\omega$ in $C_w\bigl([0,T], L^\infty \bigr)$ where
$T \ge \frac{1}{c} \frac{1}{\|\omega_0\|_\infty}$ for an absolute constant $c > 0$.

\end{theorem}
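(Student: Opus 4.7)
The plan is to run a Picard iteration on the Duhamel form of the vorticity equation. Using the divergence structure of the nonlinearity, I would write the mild formulation as
\[
\omega(t) \;=\; e^{t\Delta}\omega_0 \;+\; \int_0^t e^{(t-s)\Delta}\,\nabla\!\cdot\!\bigl(\omega\otimes u - u\otimes\omega\bigr)(s)\,ds,
\]
with $u = BS[\omega]$ recovered through the Biot--Savart law. The divergence structure is essential: it transfers one derivative onto the heat kernel, producing the locally integrable gain $\|\nabla e^{(t-s)\Delta}\|_{L^\infty\to L^\infty}\le C(t-s)^{-1/2}$, which is what makes an $L^\infty$ iteration conceivable.

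I would perform the iteration in $X_T = C([0,T];L^\infty(\mathbb{R}^3))$, restricted to a ball of radius $2\|\omega_0\|_\infty$ around the linear evolution $e^{t\Delta}\omega_0$. In parallel, the $L^2$ bound is propagated through the vorticity energy inequality $\tfrac{d}{dt}\|\omega\|_2^2 + 2\|\nabla\omega\|_2^2 \le 2\|\omega\|_\infty\|\omega\|_2^2$, which is legitimate once $\omega\in L^\infty$ and keeps $\|\omega(t)\|_2$ uniformly bounded on short time intervals. This is precisely why the authors describe the $L^2$ hypothesis as \emph{soft}: it is used only to make sense of the Biot--Savart convolution, not to generate the quantitative time bound.

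The technical heart is the velocity estimate. Because the Riesz transforms are unbounded on $L^\infty$, the naive bound $\|u\|_\infty \lesssim \|\omega\|_\infty$ fails; splitting the Biot--Savart kernel at the optimal radius $R\sim(\|\omega\|_2/\|\omega\|_\infty)^{2/3}$ yields the interpolation
\[
\|u\|_\infty \;\le\; C\,\|\omega\|_\infty^{1/3}\,\|\omega\|_2^{2/3}.
\]
Feeding this into the Duhamel term together with the $(t-s)^{-1/2}$ smoothing gives a bilinear estimate on the iteration ball of the schematic form $C\sqrt{T}\,M^{4/3}\|\omega_0\|_2^{2/3}$, where $M=\sup_{[0,T]}\|\omega(t)\|_\infty$. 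The same pipeline handles the vortex-stretching half of the nonlinearity, since $\omega\otimes u - u\otimes\omega$ is already antisymmetric and does not require separate treatment. A standard contraction together with a Gronwall argument applied to the difference of two mild solutions then delivers existence and uniqueness in $C_w([0,T];L^\infty)$; weak rather than strong continuity in time is forced by the non-separability of $L^\infty$ and the absence of strong continuity of the heat semigroup on it.

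The main obstacle, and the place where the adaptation of \cite{Ku2} really enters, is to upgrade the resulting time of existence to the scale-sharp $T\ge 1/(c\|\omega_0\|_\infty)$ with $c$ absolute, rather than a $T$ depending on both $\|\omega_0\|_\infty$ and $\|\omega_0\|_2$ through the bilinear bound above. The strategy is to exploit the NSE scaling $\omega(x,t)\mapsto\lambda^2\omega(\lambda x,\lambda^2 t)$ to reduce to $\|\omega_0\|_\infty = 1$, and then to show that the $\|\omega_0\|_2$-dependent factor enters only qualitatively --- providing the tail decay needed for the Biot--Savart convolution --- while the contraction itself closes on a universal time interval determined by the critical $L^\infty$ norm alone. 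Isolating this soft qualitative input from the hard quantitative contraction is what allows the existence time to inherit the natural $\|\omega_0\|_\infty^{-1}$ scaling of the vorticity equation.
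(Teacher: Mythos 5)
There is a genuine gap, and it sits exactly where you flag ``the main obstacle.'' Your contraction runs through the velocity bound $\|u\|_\infty \le C\,\|\omega\|_\infty^{1/3}\|\omega\|_2^{2/3}$, so the bilinear estimate closes only on a time interval satisfying $\sqrt{T}\,M^{1/3}\|\omega_0\|_2^{2/3}\lesssim 1$, i.e.\ $T\lesssim \|\omega_0\|_\infty^{-2/3}\|\omega_0\|_2^{-4/3}$. That is a \emph{quantitative} dependence on $\|\omega_0\|_2$, not a soft one, and the proposed rescue by scaling cannot remove it: under $\omega_\lambda(x,t)=\lambda^2\omega(\lambda x,\lambda^2 t)$ one has $\|\omega_{0,\lambda}\|_\infty=\lambda^2\|\omega_0\|_\infty$ but $\|\omega_{0,\lambda}\|_2=\lambda^{1/2}\|\omega_0\|_2$, so normalizing the $L^\infty$ norm leaves an uncontrolled $L^2$ factor in the very estimate that closes the iteration. (The deeper issue is dimensional: $u$ is of order $-1$ in $\omega$, so no bound of the form $\|u\|_\infty\lesssim\|\omega\|_\infty$ can hold, and any substitute necessarily imports a second, non-scaling-invariant norm. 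Asserting that this factor ``enters only qualitatively'' is precisely the step that needs a proof and is not supplied.)

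The paper's argument avoids estimating $u$ (or $\nabla u$) in $L^\infty$ altogether. It keeps the vortex-stretching term in non-divergence form $\omega_i\,\partial_i u_j$, pulls out $\|\omega\|_\infty$, and observes that $\partial_i u_j$ is a Calder\'on--Zygmund transform of $\omega$, hence $\|\nabla u\|_{BMO}\le c\,\|\omega\|_{BMO}\le c\,\|\omega\|_\infty$ --- an absolute bound with no $L^2$ input. The remaining convolution is then controlled by pairing the $BMO$ function $\partial_i u_j$ against the decaying heat kernel via $\int |f(x)|\,(|x|+1)^{-4}\,dx\le c\,\|f\|_{BMO}$, producing a clean factor of $t\,(\sup_s\|\omega(s)\|_\infty)^2$ and hence $T\ge \frac{1}{c}\|\omega_0\|_\infty^{-1}$ directly, with no $(t-s)^{-1/2}$ smoothing needed. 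The hypothesis $\omega_0\in L^2$ enters only to place $\partial_i u_j$ in the subspace of $BMO$ with decay, where that pairing is valid without subtracting local averages --- that is the precise sense in which the $L^2$ assumption is soft, and it is the mechanism your proposal is missing.
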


\begin{remark}
Since $L^\infty$ is not separable, the continuity of the heat semigroup at the initial time is
guaranteed only in the weak sense; hence the appearance of the space
$C_w\bigl([0,T], L^\infty \bigr)$ in the statement of the theorem. Alternatively, one could
replace $L^\infty$ with its (closed) subspace of the uniformly continuous functions, and
have the strong continuity at the initial time.
\end{remark}

\emph{Sketch of the proof.} \ The vorticity-velocity formulation of the 3D NSE in the components
is as follows,

\begin{equation}\label{vort}
\partial_t \omega_j - \triangle \omega_j + u_i \partial_i \omega_j = \omega_i \partial_i u_j, \ j=1,2,3
\end{equation}

where $u$ can be recovered from $\omega$ by the Biot-Savart law,

\[
 u_j(x,t) = c \int \epsilon_{j,k,l} \, \partial_{y_k}  \frac{1}{|x-y|} \, \omega_l(y,t) \, dy
\]

($\epsilon_{j,k,l}$ is the Levi-Civita symbol), making this a closed system
for $\omega$. Differentiation yields

\begin{equation}\label{r1}
\partial_i u_j (x,t) = c \, P.V. \int \epsilon_{j,k,l} \frac{\partial^2}{\partial_{x_i} \partial_{y_k}} 
\frac{1}{|x-y|} \omega_l (y,t) \, dy.
\end{equation}

A key observation is that the kernel

\[
  \frac{\partial^2}{\partial_{x_i} \partial_{y_k}} \frac{1}{|x-y|}
\]

is a classical Calderon-Zygmund kernel; hence, 

\begin{equation}\label{r2}
 \|\nabla u (\cdot, t)\|_{BMO} \le c \, \|\omega (\cdot, t)\|_{BMO}.
\end{equation}

Since we are interested in the mild solutions, we rewrite the equations (\ref{vort}) via the action
of the heat semigroup,

\begin{align}\label{mild}
 \omega_j (x,t) &= \int G(x-y, t) (\omega_0)_j (y) \, dy \notag\\
                        &- \int_0^t \int G(x-y, t-s) \, u_i \partial_i \omega_j (y,s) \, dy \, ds \notag \\
                        &+ \int_0^t \int G(x-y, t-s) \, \omega_i \partial_j u_j (y,s) \, dy \, ds,
\end{align}
                        
where $G$ is the heat kernel.

\medskip

Similarly as in the velocity-pressure description (see, e.g., \cite{GrKu98}), the estimates are performed on the sequence of smooth (entire in the spatial variable) global-in-time approximations.

\begin{align}\label{mild-n}
 \omega^{(n)}_j (x,t) &= \int G(x-y, t) (\omega_0)_j (y) \, dy \notag\\
                        &- \int_0^t \int G(x-y, t-s) \, u^{(n-1)}_i \partial_i \omega^{(n-1)}_j (y,s) \, dy \, ds \notag\\
                        &+ \int_0^t \int G(x-y, t-s) \, \omega^{(n-1)}_i \partial_j u^{(n-1)}_j (y,s) \, dy \, ds, 
\end{align}

supplemented with

\[
 u^{(n)}_j(x,t) = c \int \epsilon_{j,k,l} \, \partial_{y_k}  \frac{1}{|x-y|} \, \omega^{(n)}_l(y,t) \, dy.
\]

\medskip

The goal is to keep $\omega^{(n)}$ in $L^\infty$ and $\nabla u^{(n)}$ in $BMO$
(uniformly in $n$, locally-in-time), analogously
to keeping $u^{(n)}$ in $L^\infty$ and $p^{(n)}$ in $BMO$ in the velocity-pressure formulation.

We present an estimate on the vortex-stretching term $ \omega^{(n-1)}_i \partial_j u^{(n-1)}_j$, the term responsible for
a potentially unrestricted growth of the vorticity magnitude, in some detail.

\medskip

\texttt{An estimate on the vortex-stretching term.}

\medskip

The main ingredients are: a pointwise estimate on the heat kernel $G$,
$
 G(x,t) \le c \, \frac{\sqrt{t}}{\bigl(|x|+\sqrt{t}\bigr)^4}$ (see, e.g., \cite{LR}),
a property of a scalar-valued $BMO$ function $f$ featuring a suitable decay at infinity
(e.g., being in the closure of the test functions in the uniformly-local $L^p$ for some $p$, $1 \le p < \infty$),
$
 \int \frac{|f(x)|}{(|x|+1)^4} \, dx \le c  \, \|f\|_{BMO}$
(in general, one has to subtract a local average, for example, over a unit ball $B$, in which case the inequality takes the form
$
\int \frac{|f(x) - \frac{1}{|B|} \int_B f|}{(|x|+1)^4} \, dx \le c \, \|f\|_{BMO}$
\cite{St}), and 
the Calderon-Zygmund relation (\ref{r1}).

\medskip

Combining the above ingredients implies the following string of inequalities.

\begin{align}
\biggl| \int_0^t \int G(x-y, & t-s) \, \omega^{(n)}_i(y,s) \,  \partial_i u^{(n)}_j(y,s) \, dy \, ds\biggr|\notag\\
   & \le \sup_{s \in (0,t)} \|\omega^{(n)}(s)\|_\infty \, \int_0^t \int \frac{\sqrt{t-s}}{\bigl(|x-y| + \sqrt{t-s}\bigr)^4} \, 
   |\partial_i u^{(n)}_j| \, dy \, ds \notag\\
   &  \le \sup_{s \in (0,t)} \|\omega^{(n)}(s)\|_\infty \, \int_0^t \int \frac{1}{\bigl(|z|+1\bigr)^4} \, 
   |\partial_i u^{(n)}_j| \, dz \, ds \notag\\
   & \le \, c \, t \,  \sup_{s \in (0,t)} \|\omega^{(n)}(s)\|_\infty  \sup_{s \in (0,t)} \|\partial_i u^{(n)}_j(s)\|_{BMO}\notag\\
   & \le \, c \, t \,  \sup_{s \in (0,t)} \|\omega^{(n)}(s)\|_\infty  \sup_{s \in (0,t)} \|\omega^{(n)}(s)\|_{BMO} \notag\\
   & \le \, c \, t \,  \Bigl( \sup_{s \in (0,t)} \|\omega^{(n)}(s)\|_\infty \Bigr)^2.
\end{align}
  
\medskip

(Since $\omega^{(n)}(s)$ is in $L^2$, $\partial_i u^{(n)}_j(s)$ is 
in $L^2$ as well (by Calderon-Zygmund), and the first $BMO$-bound is available.)

\medskip

The estimate on the advection term can be absorbed in the above bound, and we arrive at 

\[
  \sup_{s \in (0,t)} \|\omega^{(n+1)}(s)\|_\infty \le c \, \|\omega_0\|_\infty + c \,  t \, \Bigl(\sup_{s \in (0,t)} \|\omega^{(n)}(s)\|_\infty\Bigr)^2, 
\]

which yields boundedness of the sequence on the time-interval of the desired length. A similar argument implies
the contraction property in $C_w\bigl([0,T], L^\infty \bigr)$, concluding the proof.

\medskip

The complexification argument is analogous to the complexification of the velocity-pressure 
formulation; for details, see \cite{GrKu98}.

The main idea is as follows. Recall that the functions in the sequence
of approximations $(\omega^{(n)}, u^{(n)})$ 
are entire functions in the spatial variable for any $t>0$. Then, denoting a point in
$\mathbb{C}^3$ by $x + i y$,
introduce a related sequence $(\Omega^{(n)}, U^{(n)})$ by

\[
\Omega^{(n)}(x,t) = \omega^{(n)}(x + i \alpha t, t)   \ \ \ \mbox{and} \ \ \ \ U^{(n)}(x,t) = u^{(n)}(x + i \alpha t, t)
\]

where $\alpha > 0$ is an optimization parameter. 

\medskip

The real and the imaginary parts of each of the elements in the sequence $(\Omega^{(n)}, U^{(n)})$ 
satisfy a real system in which the principal
linear parts are decoupled, and the only new type of terms generated by the complexification
procedure are two lower-order (first order) linear terms coming from the chain rule and the
Cauchy-Riemann equations in $\mathbb{C}^3$.

\medskip

Local-in-time estimates on the system are then of the same type as the estimates on the real 3D NSE, 
resulting in expansion of the domain of analyticity--after the optimization in the parameter
$\alpha$--comparable to $\sqrt{t}$ (the only global-in-time property needed is for the real and imaginary
parts of each $\Omega^{(n)}$, on each real-space `slice', to be in $L^2$; this is, as in the
real case, guaranteed by the assumption that $\omega_0$ in $L^2$).

More precisely, we arrive at the following result.

\medskip

\begin{theorem} [complex setting]\label{cpx}

Let the initial datum $\omega_0$ be in $L^2 \cap L^\infty$,
and $M$ a constant 
larger than 1.
Then there is a constant $c(M)>1$  such that there exists a unique mild solution $\omega$ in $C_w\bigl([0,T], L^\infty \bigr)$ where
$T \ge \frac{1}{c(M)} \frac{1}{\|\omega_0\|_\infty}$, and for any $t$ in $(0, T]$ the solution $\omega$ is
the $\mathbb{R}^3$-restriction of a holomorphic function $\omega$ defined in the domain

\[
 \Omega_t = \bigg\{ x+iy \in \mathbb{C}^3: \, |y| < \frac{1}{\sqrt{c(M)}} \, \sqrt{t} \bigg\};
\]

moreover, $\|\omega(t)\|_{L^\infty(\Omega_t)} \le M \, \|\omega_0\|_\infty$.

\end{theorem}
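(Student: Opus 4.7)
The plan is to adapt the real-variable contraction of the preceding theorem to a complexified sequence of approximations, and then optimize a single real parameter $\alpha$ to read off the width of the strip of analyticity. Since each iterate $\omega^{(n)}$ produced above is entire in the spatial variable for every $t > 0$, I define
\[
\Omega^{(n)}(x,t) = \omega^{(n)}(x + i\alpha t, t), \qquad U^{(n)}(x,t) = u^{(n)}(x + i\alpha t, t),
\]
with $\alpha > 0$ to be chosen. A direct application of the chain rule together with the Cauchy--Riemann equations in $\mathbb{C}^3$ shows that, writing $\Omega^{(n)} = A^{(n)} + i B^{(n)}$, the components satisfy a real system on $\mathbb{R}^3 \times (0,\infty)$ of the schematic form
\[
\partial_t A^{(n)} - \Delta A^{(n)} + \alpha \cdot \nabla B^{(n)} = \mathcal{N}_A^{(n-1)}, \qquad \partial_t B^{(n)} - \Delta B^{(n)} - \alpha \cdot \nabla A^{(n)} = \mathcal{N}_B^{(n-1)},
\]
in which the principal parts are decoupled heat operators and the only genuinely new contributions beyond the real scheme are the first-order skew-symmetric drifts $\pm\alpha \cdot \nabla$ generated by the moving substitution $y = \alpha t$.

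I would then rewrite this system in mild (Duhamel) form and re-run the $L^\infty/BMO$ estimate from the real theorem. The nonlinearities have exactly the vortex-stretching plus advection structure treated above, with $U^{(n-1)}$ recovered from $\Omega^{(n-1)}$ by a complexified Biot--Savart law whose kernel is still of Calder\'on--Zygmund type on each real slice; the estimate $\|\nabla U^{(n-1)}\|_{BMO} \leq c\,\|\Omega^{(n-1)}\|_{BMO}$ therefore survives the complexification. Combining this with the pointwise heat-kernel bound and the $BMO$-decay identity recalled earlier, and treating the drift as an inhomogeneous linear perturbation, yields
\[
\sup_{s \in (0,t)} \|\Omega^{(n+1)}(s)\|_\infty \leq \|\omega_0\|_\infty + c\,t\,\Bigl(\sup_{s \in (0,t)} \|\Omega^{(n)}(s)\|_\infty\Bigr)^{2} + c\,\alpha\sqrt{t}\, \sup_{s \in (0,t)} \|\Omega^{(n)}(s)\|_\infty,
\]
the last term encoding the heat-smoothing gain $\int_0^t (t-s)^{-1/2}\,ds \sim \sqrt{t}$ applied to the drift $\alpha\cdot\nabla$.

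The next step is the optimization in $\alpha$. For a fixed target time $t^* \in (0,T]$ I would choose $\alpha = \alpha(t^*) = 1/\sqrt{c(M)\,t^*}$ and close the contraction inside the ball $\{\sup_{s \in (0,t^*)}\|\Omega(s)\|_\infty \leq M\|\omega_0\|_\infty\}$. The quadratic bound forces $t^* \leq 1/(c(M)\,\|\omega_0\|_\infty)$ as in the real setting, while the drift contribution is $c/\sqrt{c(M)}$, absorbed by taking $c(M)$ sufficiently large (which is possible precisely because $M > 1$). At time $t^*$ the strip of analyticity obtained this way has width $\alpha(t^*)\,t^* = \sqrt{t^*/c(M)}$, matching $\Omega_{t^*}$, and the bound $\|\Omega(t^*)\|_\infty \leq M\|\omega_0\|_\infty$ is exactly the asserted $L^\infty$ estimate on the complex strip. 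Since $t^*$ was arbitrary in $(0,T]$, and uniqueness on the real line pins down the real restriction to the solution furnished by the real theorem, the conclusion follows.

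The main technical obstacle, in my view, is justifying that the complexified Biot--Savart operator still produces a Calder\'on--Zygmund bound on each real slice: one must deform contours so that $\partial_{x_i} U_j^{(n)}(x,t)$ is represented as a singular integral of $\Omega^{(n)}$ against the standard kernel on $\mathbb{R}^3$, rather than on the shifted copy $\mathbb{R}^3 + i\alpha t$. This is precisely the role played by the `soft' assumption $\omega_0 \in L^2$: the resulting $L^2$-decay of each complex iterate on horizontal slices legitimizes the contour shift and reduces the complex estimate to its real counterpart, without any quantitative dependence of $c(M)$ on $\|\omega_0\|_2$.
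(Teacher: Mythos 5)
Your proposal follows essentially the same route as the paper: complexify the entire-in-space approximations via the substitution $x \mapsto x + i\alpha t$, observe that the real and imaginary parts satisfy a real system whose principal linear parts are decoupled and which differs from the real scheme only by first-order terms coming from the chain rule and the Cauchy--Riemann equations, re-run the $L^\infty$/$BMO$ estimates, and optimize in $\alpha$ to obtain a strip of width comparable to $\sqrt{t}$, with $\omega_0 \in L^2$ serving only as a soft assumption guaranteeing $L^2$-decay of the iterates on each real slice. The paper itself presents only this sketch (deferring details to \cite{GrKu98}), and your additional explicit points---the Duhamel treatment of the drift with the $\sqrt{t}$ heat-smoothing gain and the choice $\alpha = 1/\sqrt{c(M)\,t^*}$---are consistent with it.
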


\begin{remark}\label{loc}
The above theorem can be localized in the spirit of \cite{BGK15}; this would provide a (local) lower bound on the radius 
of spatial analyticity in terms of the local quantities only (at the expense of a logarithmic correction).
\end{remark}

\bigskip

\section{Spatial complexity of the vorticity components:
a regularity criterion}

\medskip

Henceforth, $m^n$ will denote the n-dimensional Lebesgue measure. Let us start with
recalling the concepts of 1D and 3D sparseness suitable for our purposes (\cite{Gr13, FGL}).

\medskip

Let $S$ be an open subset of $\mathbb{R}^3$, $\delta \in (0,1)$, $x_0$ a point in $\mathbb{R}^3$,
and $r \in (0, \infty)$.

\medskip

\begin{definition}\label{1D_sparse} 
$S$ is {\it 1D $\delta$-sparse around $x_0$ at scale $r$} if there exists a unit vector 
${\bf d}$ in $S^2$ such that 
\[
\frac{m^1\bigl(S\cap (x_0-r{\bf d}, x_0+r{\bf d})\bigr)}{2r} \leq \delta.
\]
\end{definition} 

\medskip

\begin{definition}\label{3D_sparse} 
$S$ is {\it 3D $\delta$-sparse
around $x_0$ at scale $r$} if 
\[
\frac{m^3\bigl(S\cap B(x_0,r)\bigr)}{m^3\bigl(B(x_0,r)\bigr)} \leq \delta.
\]
\end{definition}

\medskip

\begin{remark}
On one hand, it is straightforward to check that, for any $S$, 3D $\delta$-sparseness at scale $r$
implies 1D $(\delta)^\frac{1}{3}$-sparseness at scale $\rho$, for some $\rho$ in $(0, r]$,
at any given pair $(x_0, r)$. On the other hand, there are plenty of simple examples in which any attempt
at the converse fails.
However, if one is interested in sparseness of a set at scale $r$, uniformly in $x_0$ (as we will be), the difference is 
much less pronounced.
\end{remark}

\medskip

We will formulate our regularity criterion as a no-blow-up criterion for a solution $\omega$ 
belonging to $C\bigl( [0,T^*), L^\infty\bigr)$ where $T^*$ is the first 
(possible) blow-up time. In this setting, it is convenient
to have the notion of an `escape time'.

\medskip

\begin{definition}\label{escape_time} 
Let $\omega$ be in $C\bigl( [0,T^*), L^\infty\bigr)$ where $T^*$ is the first possible blow-up time. 
A time $t$ in $(0, T^*)$ is an
\emph{escape time} if $\|\omega(\tau)\|_\infty > \|\omega(t)\|_\infty$ for any $\tau$
in $(t, T^*)$.
\end{definition}

\begin{remark}
It is a well-known fact that for any level $L > 0$, there exists a unique escape time $t(L)$;
this follows from the local-in-time well-posedness in a 
straightforward manner. As a consequence, there are continuum many escape times in $(0, T^*)$.
\end{remark}

\medskip

As in the previous section, we 
assume that the initial datum $\omega_0$ is (in addition) in $L^2$.

Then we can solve the 3D NSE locally-in-time, at any $t$ in $(0, T^*)$, and uniqueness in conjunction
with Theorem \ref{cpx} will yield a lower bound on the radius of spatial analyticity near $t$. In particular,
for an escape time $t$, we consider a temporal point $s=s(t)$ conforming to the 
requirement

\begin{equation}\label{s}
s \ \ \mbox{in} \ \ \biggl[t+\frac{1}{4c(M) \|\omega(t)\|_\infty},
t+\frac{1}{c(M) \|\omega(t)\|_\infty}\biggr];
\end{equation}

then, a lower bound on the radius of spatial analyticity at $s$ is given by

\[
\frac{1}{2 c(M) \|\omega(t)\|_\infty^\frac{1}{2}}.
\]

Moreover, since $t$ is an escape time, this lower bound can be replaced by

\[
\frac{1}{2 c(M) \|\omega(s)\|_\infty^\frac{1}{2}},
\]

i.e., in this situation there is no need to worry about a time-lag--a lower
bound on the analyticity radius at $s$ is given in terms of the $L^\infty$-norm
of the solution at $s$.

\medskip

The property of 1D sparseness will be assumed on the super-level sets of the positive and negative parts
of the vorticity components $\omega_j$ at $s$ (recall that for a scalar-valued function $f$, the positive and
negative parts of $f$ are given by $f^+(x)=\max\,(f(x),0)$ and $f^-(x)=-\min\,(f(x),0)$, respectively); 
more precisely, the sets of
interests will be the sets $V^{j, \pm}_s$ defined by

\begin{equation}\label{V}
 V^{j, \pm}_s=\biggl\{x \in \mathbb{R}^3: \, \omega_j^\pm(x,s) > \frac{1}{2M}\|\omega(s)\|_\infty\biggr\} 
\end{equation}

($M$ is as in Theorem \ref{cpx}).

\medskip

\begin{center} 
\begin{figure}
\includegraphics[scale=.36]{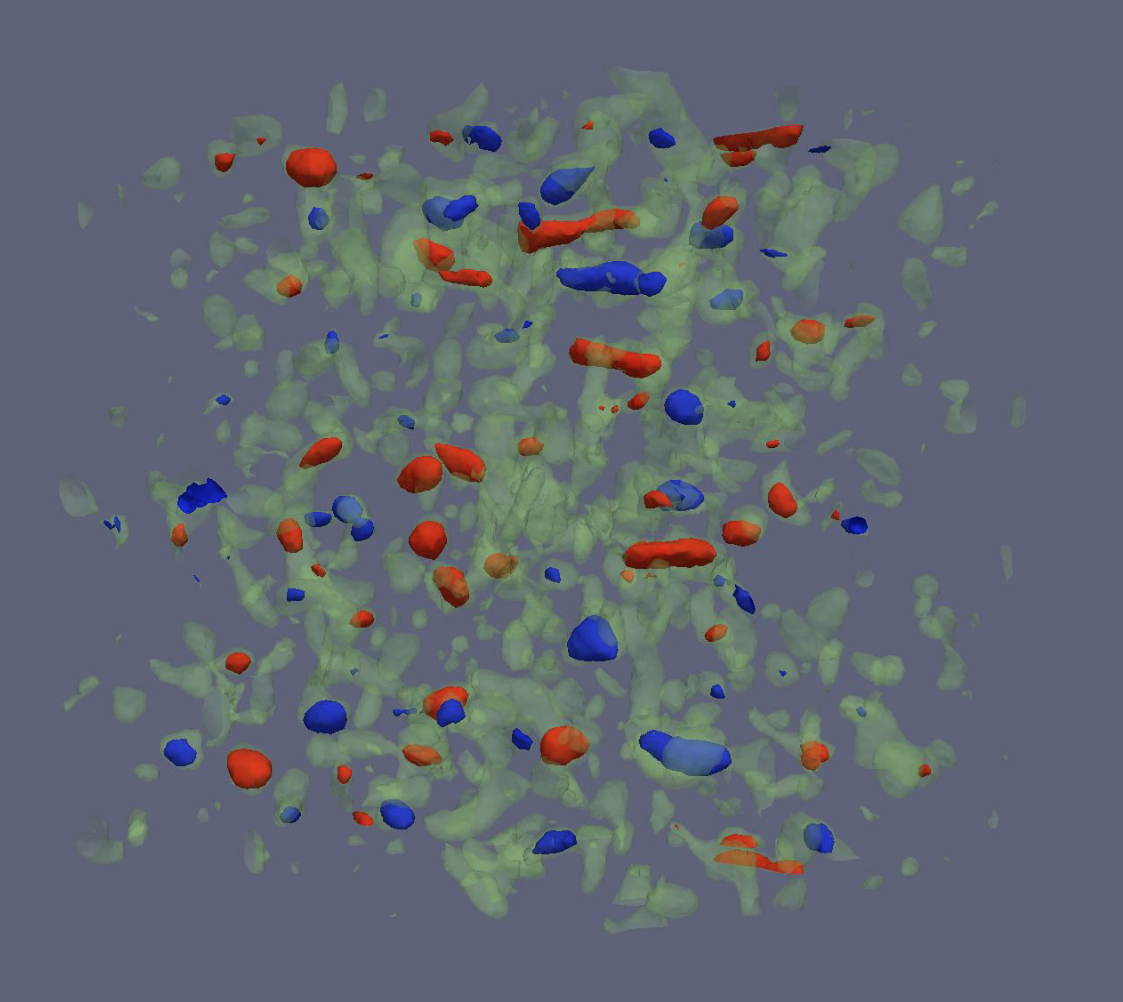}
\caption{\small{The blue and the red regions are precisely the sets $V^{1, \pm}$, i.e., the super-level sets of the positive and 
the negative parts of $\omega_1$, to be contrasted to the totality of the region visualized which corresponds to the super-level
set of the the full vectorial norm $\max \{|\omega_1|, |\omega_2|, |\omega_3|\}$ (from a
  snapshot of a simulation 
  initialized with the low frequency noise-type initial data at the Reynolds number of approximately $10^4$; courtesy of
  M. Mizstal, NBI).}}
 \end{figure}
 \end{center}

\medskip

Since the proof of our regularity criterion is based on the harmonic measure maximum principle,
we recall a version suitable for our purposes. Here, as in \cite{Gr13}, we are concerned with the harmonic
measure in the complex plane $\mathbb{C}$; for a suitable pair of sets $\bigl( \Omega, K \bigr)$,
the harmonic measure of $K$ with respect to $\Omega$, evaluated at a point $z$, will
be denoted by $h(z, \Omega, K)$. (For the basic properties of the harmonic measure in this
setting, see, e.g., \cite{Ahl, Ran}.)

\begin{proposition}\cite{Ran}\label{ran}
Let $\Omega$ be an open, connected set in $\mathbb{C}$ such that its boundary has nonzero Hausdorff dimension, 
and let $K$ be a Borel subset of the boundary. Suppose that $u$ is a subharmonic function on $\Omega$ satisfying
\begin{align*}
  u(z) \le M,   & \ \mbox{for} \  z \in \Omega\\
  \limsup_{z \to \zeta} u(z) \le m, & \  \mbox{for}  \ \zeta \in K.
\end{align*}
Then
\[
 u(z) \le m \, h(z, \Omega, K) + M \bigl(1 - h(z, \Omega, K)\bigr) \ \ \mbox{for} \ z \in \Omega.
\]
\end{proposition}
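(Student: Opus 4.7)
The plan is to carry out the classical two-constants (Nevanlinna) argument: build a harmonic majorant for $u$ out of $h(\,\cdot\,,\Omega,K)$ and invoke the Perron characterization of the harmonic measure. The first step is to reduce to the nontrivial case $m<M$, since the pointwise bound $u\le M$ already gives the conclusion whenever $m\ge M$.

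Next, I would introduce the auxiliary function $s(z)=(M-u(z))/(M-m)$ on $\Omega$. Because $u$ is subharmonic with $u\le M$, the function $s$ is superharmonic on $\Omega$ and bounded below by $0$. The hypothesis $\limsup_{z\to\zeta}u(z)\le m$ for $\zeta\in K$ translates into $\liminf_{z\to\zeta}s(z)\ge 1$ at those boundary points, while on $\partial\Omega\setminus K$ the bound $s\ge 0$ is exactly what one needs, since the target boundary datum there is $\chi_K\equiv 0$. Thus $s$ is an admissible competitor in the Perron family for the Dirichlet problem with boundary data $\chi_K$, and the Perron characterization identifies the infimum of that family with $h(\,\cdot\,,\Omega,K)$. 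Consequently $s(z)\ge h(z,\Omega,K)$ throughout $\Omega$, and rearranging yields
\[
u(z)\le M-(M-m)\,h(z,\Omega,K)=m\,h(z,\Omega,K)+M\bigl(1-h(z,\Omega,K)\bigr),
\]
which is the desired inequality.

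The only point requiring a moment of care is the fact that the boundary limit hypothesis on $u$ is given only on the subset $K$: the deficit on $\partial\Omega\setminus K$ is precisely compensated by the pointwise global bound $u\le M$, which supplies the required $\liminf$ inequality for $s$ against the datum $0$ there. The positivity of the Hausdorff dimension of $\partial\Omega$ plays no role beyond guaranteeing that $\partial\Omega$ is non-polar, so that $h(\,\cdot\,,\Omega,K)$ is a genuine, nontrivial harmonic function and the Perron machinery applies unambiguously. I do not anticipate any serious obstacle; the argument is entirely structural, and the subtlest bookkeeping is simply tracking which semicontinuities become $\liminf$ versus $\limsup$ under the sign change $u\mapsto(M-u)/(M-m)$.
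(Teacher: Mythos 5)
The paper offers no proof of this proposition --- it is quoted, with citation, from Ransford's book --- so there is nothing internal to compare against; your argument is the standard two-constants-theorem proof (the superharmonic competitor $s=(M-u)/(M-m)$ placed in the upper Perron class for $\chi_K$), which is exactly how the cited source establishes it, and it is correct, including the reduction to $m<M$ and the role of the non-polar boundary. The one step you lean on implicitly is the resolutivity of $\chi_K$ for a general Borel set $K$, needed to identify the infimum of the upper Perron family with $h(\cdot,\Omega,K)$; this is itself a theorem in Ransford, so invoking it is legitimate, but it is worth naming explicitly.
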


The last ingredient needed is an extremal property of the harmonic
measure in the unit disk $\mathbb{D}$.

\medskip

\begin{proposition}\cite{Sol}\label{sol}
Let $\lambda$ be in $(0, 1)$, $K$ a closed subset of $[-1,1]$ 
such that $m^1(K) = 2\lambda$,
and suppose that the origin is in $\mathbb{D} \setminus K$. Then
\[
 h(0,\mathbb{D},K) \ge h(0,\mathbb{D}, K_\lambda) =
 \frac{2}{\pi} \arcsin \frac{1-(1-\lambda)^2}{1+(1-\lambda)^2}
\]
where $K_\lambda = [-1, -1+\lambda] \cup [1-\lambda, 1]$.
\end{proposition}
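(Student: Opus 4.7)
My plan is to establish the proposition in two steps: derive the explicit value for $h(0, \mathbb{D}\setminus K_\lambda, K_\lambda)$, and then show that $K_\lambda$ is extremal among admissible sets.

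For the computation, the key is the dihedral symmetry of $\Omega_\lambda := \mathbb{D}\setminus K_\lambda$ under the two reflections $z\mapsto\bar z$ and $z\mapsto -z$, both of which fix the origin and preserve $K_\lambda$. The harmonic function $u$ with boundary values $1$ on $K_\lambda$ (treated as a two-sided slit) and $0$ on $\partial\mathbb{D}$ inherits these symmetries, so the computation reduces to the first-quadrant wedge $Q_\lambda = \Omega_\lambda \cap \{\mathrm{Re}\,z>0,\ \mathrm{Im}\,z>0\}$ with Dirichlet data $u=0$ on the quarter-arc, $u=1$ on $[1-\lambda,1]$, and zero Neumann data on the segments $[0,1-\lambda]$ and $[0,i]$. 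A short chain of explicit conformal maps --- $z\mapsto z^2$ to unfold the right-angle corner at $0$, the Joukowski-type map $J(w)=(w+1/w)/2$ to straighten the slit geometry, and a final M\"obius normalization --- sends $Q_\lambda$ to a model domain (half-plane or disk) on which $u$ becomes proportional to the angle subtended by the image of $K_\lambda$ at the image of the origin. Tracking the origin through this chain, and using the identity $\sin(2\arctan(1/a))=2a/(1+a^2)$ with $a=1-\lambda$, produces the asserted formula.

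For the extremality inequality $h(0,\mathbb{D},K)\ge h(0,\mathbb{D},K_\lambda)$, I would reduce an arbitrary admissible $K$ to $K_\lambda$ via two successive symmetrizations, each preserving $m^1(K)=2\lambda$ and the constraint $0\notin K$, and each lowering $h(0,\cdot,\cdot)$. First, symmetrizing $K$ across the imaginary axis (which passes through $0$) yields an even set $\widetilde K = -\widetilde K$; the monotonicity $h(0,\mathbb{D}\setminus\widetilde K,\widetilde K)\le h(0,\mathbb{D}\setminus K,K)$ is a polarization statement proved by applying the maximum principle to the difference between $u$ and its reflection across the axis. Second, for the even set $\widetilde K$, an outward (Steiner-type) rearrangement concentrating $\widetilde K\cap[0,1]$ into the single interval $[1-\lambda,1]$ (and symmetrically on $[-1,0]$) produces exactly $K_\lambda$; the corresponding monotonicity is the heart of Solynin's method and is obtained by iterating polarizations with respect to vertical lines, followed by a limiting argument.

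The main obstacle is the second reduction. While symmetrization across an axis through the evaluation point $0$ is essentially standard, concentrating mass near $\pm 1$ requires polarizations with respect to vertical lines that do \emph{not} pass through $0$, so the evaluation point is displaced by the reflection at each intermediate step. Controlling the monotonicity of $h(0,\cdot,\cdot)$ under such polarizations, and then passing to the limit of infinitely many iterated rearrangements, is where one invokes Solynin's continuous dissymmetrization framework or, equivalently, the Baernstein star-function inequality. The formula derivation in the first step, in contrast, is intricate but routine conformal bookkeeping once the right chain of maps has been identified.
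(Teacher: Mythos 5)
First, a point of reference: the paper does not prove this proposition at all --- it is imported verbatim from Solynin's paper \cite{Sol} and used as a black box, so there is no in-paper argument to compare yours against. Your sketch therefore has to stand on its own, and while its architecture is reasonable (an explicit conformal computation for the extremal configuration $K_\lambda$, plus a rearrangement argument showing that $K_\lambda$ minimizes $h(0,\cdot,\cdot)$ among admissible $K$), the extremality half has a genuine gap. Your first reduction --- ``symmetrizing $K$ across the imaginary axis to an even set $\widetilde K=-\widetilde K$ of the same measure'' --- is not a well-defined polarization: polarization with respect to $\{\mathrm{Re}\,z=0\}$ pushes $K$ into one half-plane rather than producing an even set, and the maximum-principle comparison of $u$ with its reflection proves the polarization inequality, not the monotonicity you need. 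This step is where the real content lies. Take $K=[1-2\lambda,1]$ (admissible when $\lambda<\tfrac12$): it is already maximally concentrated at an endpoint, so no outward rearrangement applies, and the claim $h(0,\mathbb{D},[1-2\lambda,1])\ge h(0,\mathbb{D},K_\lambda)$ amounts to showing that splitting the mass equally between the two ends lowers the harmonic measure. One can verify this particular case from the single-slit formula (it reduces to $\frac{2}{\pi}\arcsin\frac{\lambda}{1-\lambda}\ge\frac{2}{\pi}\arcsin\frac{\lambda(2-\lambda)}{2-2\lambda+\lambda^2}$), but your sketch supplies no mechanism that proves the redistribution step in general. Your second reduction then openly defers to ``Solynin's continuous dissymmetrization framework,'' i.e., to the machinery of the very theorem being proven, so as written the argument is circular precisely at its crux.

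The computational half is fine in outline, though you can shortcut the mixed Dirichlet--Neumann bookkeeping: $z\mapsto z^2$ maps $\mathbb{D}\setminus K_\lambda$ two-to-one onto $\mathbb{D}\setminus[(1-\lambda)^2,1]$, fixes the origin, and matches the boundary data, so $h(0,\mathbb{D},K_\lambda)=h(0,\mathbb{D},[b,1])$ with $b=(1-\lambda)^2$; the classical single-slit evaluation $h(0,\mathbb{D},[b,1])=\frac{2}{\pi}\arcsin\frac{1-b}{1+b}$ then gives the stated value directly. For the inequality itself, you should either work through Solynin's ordering-of-sets/polarization scheme in detail (carefully tracking what happens to the evaluation point under reflections in lines that do not contain it) or simply cite \cite{Sol}, as the paper does.
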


\medskip

At this point, it is beneficial to set a specific set of constants:
let $M$ be the solution to the equation $\frac{1}{2}h^*+(1-h^*)M=1$ where
$h^*=\frac{2}{\pi}\arcsin\frac{1-(\frac{3}{4})^\frac{2}{3}}{1+(\frac{3}{4})^\frac{2}{3}}$,
and let $c(M)$ be as in Theorem \ref{cpx} (note that $1<M<\frac{3}{2}$).

\medskip

\begin{theorem}\label{yay1}

Let $\omega$ be in $C\bigl( [0,T^*), L^\infty\bigr)$ where $T^*$ is the first possible blow-up time,
and assume, in addition, that $\omega_0$ is in $L^2$ (then, $\omega$ is automatically in
$L^\infty\bigl( (0,T^-), L^2 \bigr)$ for any $0 < T^- < T^* $).

\medskip

Let $t$ be an escape time, and suppose that there exists a temporal point
$s=s(t)$ as in (\ref{s}) such that for any spatial point
$x_0$, there exists a scale $\rho$, $0<\rho\le \frac{1}{2 c(M) \|\omega(s)\|_\infty^\frac{1}{2}}$
and a direction $\mathbf{d}$ with the property that the super-level set $V^{j, \pm}_s$ delineated in (\ref{V})
is 1D $(\frac{3}{4})^\frac{1}{3}$-sparse around $x_0$ at scale $\rho$; here, the pair $(j, \pm)$ is chosen 
according to the following
selection criterion: $|\omega(x_0, s)| = \omega_j^\pm(x_0, s)$.

\medskip

Then $T^*$ is not a blow-up time.
\end{theorem}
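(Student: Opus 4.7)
The plan is to combine the spatial-analyticity result (Theorem \ref{cpx}) with the harmonic-measure maximum principle (Propositions \ref{ran} and \ref{sol}), applied on a complex one-dimensional slice through an arbitrary real base point. Argue by contradiction: assume $T^*<\infty$. Running Theorem \ref{cpx} with $\omega(t)$ in place of the initial datum produces a holomorphic extension of each $\omega_j(\cdot,s)$ to a complex strip of width at least $\frac{1}{2c(M)}\|\omega(t)\|_\infty^{-1/2}$, on which $|\omega_j|\le M\|\omega(t)\|_\infty$. The escape-time property $\|\omega(t)\|_\infty<\|\omega(s)\|_\infty$ upgrades this width to $\ge \frac{1}{2c(M)}\|\omega(s)\|_\infty^{-1/2}$, so every disk of radius $\rho$ furnished by the sparseness hypothesis lies inside the analyticity domain at time $s$.

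Fix $x_0$ with $|\omega(x_0,s)|>\tfrac{1}{2M}\|\omega(s)\|_\infty$ (later pushed along a maximizing sequence), select $(j,\pm)$ by the theorem's criterion, and take the $+$ branch for notational simplicity. Using the sparseness data $(\rho,\mathbf{d})$, introduce
\[
 g(z) := \omega_j\bigl(x_0+\rho z\,\mathbf{d},\,s\bigr),\qquad z\in\mathbb{D},
\]
which is holomorphic on $\mathbb{D}$, real on $[-1,1]$, and satisfies $|g|\le M\|\omega(t)\|_\infty$. Since $\omega_j\le\omega_j^+$ pointwise, rescaling the 1D $(3/4)^{1/3}$-sparseness of $V^{j,+}_s$ along $\mathbf{d}$ forces
\[
 K := \bigl\{z\in[-1,1]:\; g(z)\le \tfrac{1}{2M}\|\omega(s)\|_\infty\bigr\}
\]
to be a closed set with $m^1(K)\ge 2\bigl(1-(3/4)^{1/3}\bigr) =: 2\lambda$; the condition on $x_0$ also gives $0\notin K$.

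Now apply Proposition \ref{ran} to the harmonic function $v=\mathrm{Re}\,g$ on the connected open set $\Omega=\mathbb{D}\setminus K$, with global bound $M\|\omega(t)\|_\infty$ and boundary bound $\tfrac{1}{2M}\|\omega(s)\|_\infty$ on $K$, and Proposition \ref{sol} (applied to a closed subset of $K$ of measure exactly $2\lambda$) to obtain $h(0,\Omega,K)\ge h^*$. This yields
\[
 |\omega(x_0,s)| \;=\; g(0) \;\le\; \tfrac{h^*}{2M}\,\|\omega(s)\|_\infty + M(1-h^*)\,\|\omega(t)\|_\infty,
\]
after first ruling out the alternate regime $\|\omega(s)\|_\infty>2M^2\|\omega(t)\|_\infty$, in which the right-hand side becomes non-decreasing in $h$ and would force the impossible $\|\omega(s)\|_\infty\le\|\omega(s)\|_\infty/(2M)$. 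Passing to the supremum over $x_0$ and solving,
\[
 \|\omega(s)\|_\infty \;\le\; \frac{M(2-h^*)}{2M-h^*}\,\|\omega(t)\|_\infty.
\]
The defining equation $\tfrac{1}{2}h^*+(1-h^*)M=1$ rewrites as $M(1-h^*)=(2-h^*)/2$; combined with $M>1$, this forces the coefficient $\frac{M(2-h^*)}{2M-h^*}$ strictly below $1$, giving $\|\omega(s)\|_\infty<\|\omega(t)\|_\infty$ and contradicting the escape property.

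The principal obstacle I anticipate is the tight numerical calibration: the sparseness ratio $(3/4)^{1/3}$, the truncation fraction $\tfrac{1}{2M}$, and the analyticity amplification $M$ from Theorem \ref{cpx} are all coupled through the implicit equation defining $M$ in terms of $h^*$. Only when these are matched precisely does the Ahlfors--Beurling-type extremal bound of Proposition \ref{sol} dominate the strip-norm bound, making the interpolation coefficient strictly less than $1$. A secondary technical point is that $\omega(\cdot,s)$ need not attain its $L^\infty$ norm, so the supremum step must proceed via a maximizing sequence rather than an actual maximizer.
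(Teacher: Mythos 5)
Your proof is correct and follows essentially the same route as the paper: complexification along the sparse direction via Theorem \ref{cpx}, the extremal lower bound $h(0,\mathbb{D},K)\ge h^*$ from Proposition \ref{sol} applied to the complement of the super-level set on the real diameter, and the two-constants theorem calibrated by the defining equation $\frac{1}{2}h^*+(1-h^*)M=1$. The only (harmless) cosmetic differences are that you work with $\mathrm{Re}\,g$ rather than the subharmonic parts $\omega_j^\pm$, that you keep $\|\omega(s)\|_\infty$ on the right-hand side and solve the resulting inequality instead of first replacing $\frac{1}{2M}\|\omega(s)\|_\infty$ by $\frac{1}{2}\|\omega(t)\|_\infty$ via the bound $\|\omega(s)\|_\infty\le M\|\omega(t)\|_\infty$ (which also renders your ``alternate regime'' discussion unnecessary), and that you dispatch the case $|\omega(x_0,s)|\le\frac{1}{2M}\|\omega(s)\|_\infty$ through a maximizing sequence rather than as a separate scenario.
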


\medskip

\begin{remark}
Notice that--as far as the temporal intermittency goes--it is enough that the condition on
local 1D sparseness holds at some $s(t)$ for a single escape time $t$ (recall that there
are continuum-many escape times; in particular, there are continuum-many escape times
in any interval of the form $(T^*-\epsilon, T^*)$).
\end{remark}

\medskip

\begin{proof}

(The argument to be presented is a refinement of the argument introduced in \cite{Gr13}.)

Let $t$ be an escape time and $s=s(t)$ satisfy the condition (\ref{s}). The idea
is to show that

\begin{equation}\label{contr}
 \|\omega(s)\|_\infty \le \|\omega(t)\|_\infty
\end{equation}

utilizing the sparseness at $s$ via the harmonic measure maximum
principle; this would contradict the
assumption that $t$ is an escape and in turn the assumption that $T^*$ is the first 
possible blow-up time.

\medskip

Fix a spatial point $x_0$ and let $\mathbf{d}$ be the direction of 1D sparseness around $x_0$
postulated in the theorem. Since the 3D NSE are translationally and rotationally
invariant, and the coordinate transformations would change neither the lower bound
on the analyticity radius nor the $L^\infty$-bound on the complexified solution
(nor the sparseness), without
loss of generality, assume that $x_0 = 0$ and $\mathbf{d} = (1, 0, 0)$. (The argument
is completely local, i.e., we are considering one spatial point $x_0$ at the time.)

\medskip

Let us focus on the complexification of the real coordinate $x_1$ (regarding the other two
variables as parameters), and view $\omega(s)$ 
as an analytic, $\mathbb{C}^3$-valued function on the symmetric strip around the $x_1$-axis 
of the width of at least

\medskip

\[
\frac{1}{c(M) \|\omega(s)\|_\infty^\frac{1}{2}}.
\]

Since $\omega$ is analytic, the real and the imaginary parts of the component functions $\omega_j$ 
are harmonic, and their positive and negative parts subharmonic in the strip; in particular, the assumption
on the scale of sparseness $\rho$ implies that they are subharmonic in the open disc

\medskip

\[
 D_\rho = \{z \in \mathbb{C}: \; |z| < \rho\}.
\]

Select a pair $(j, \pm)$ such that $|\omega(0, s)|$ = $\omega_j^\pm(0, s)$. By the premise,
the corresponding set $V_s^{j,\pm}$ is 1D $(\frac{3}{4})^\frac{1}{3}$-sparse around 0 at scale $\rho$
(in the direction $e_1$).

\medskip

Define a set $K$ by

\medskip

\[
 K = \overline{ V_s^{j,\pm} \cap (-\rho, \rho)}
\]
 
(the complement is taken within the interval $[-\rho, \rho]$),
and note that $m^1(K) \ge 2 \rho \bigl(\frac{3}{4}\bigr)^\frac{1}{2}$.

\medskip

There are two disjoint scenarios to consider. In the first one, 0 is in $K$. Then,

\medskip

\[
|\omega(0, s)| = \omega_j^\pm(0, s) \le \frac{1}{2M} \|\omega(s)\|_\infty \le \frac{1}{2} \|\omega(t)\|_\infty;
\]

i.e., the contribution of this particular $x_0$ is consistent with (\ref{contr}), and we can move on.
In the second scenario, we can estimate
the harmonic measure of $K$ with respect to $D_\rho$ at 0 utilizing the sparseness in conjunction
with Proposition \ref{sol}.

\medskip

Recall that the harmonic measure is invariant with respect to any conformal map, and in particular
with respect to the scaling map $\phi(z) = \frac{1}{\rho} \, z$; hence, Proposition \ref{sol} yields

\medskip

\[
 h(0, D_\rho, K) \ge \frac{2}{\pi}\arcsin\frac{1-(\frac{3}{4})^\frac{2}{3}}{1+(\frac{3}{4})^\frac{2}{3}} = h^*.
\]

The harmonic measure maximum principle (Proposition \ref{ran}) applied to the subharmonic function $\omega_j^\pm$
then implies the following bound:

\medskip

\[
 \omega_j^\pm(0, s) \le h^* \, \frac{1}{2} \|\omega(t)\|_\infty + (1-h^*) \, M \|\omega(t)\|_\infty
 = \biggl( \frac{1}{2}h^*+(1-h^*)M \biggr) \|\omega(t)\|_\infty = \|\omega(t)\|_\infty;
\]

the estimate on $\omega_j^\pm$ on the complement of $K$ (within $D_\rho$) comes from the
estimate on the complexified solution in Theorem \ref{cpx}, and the last equality from our
choice of the parameters. 

\medskip

Consequently, the second scenario is also compatible with the inequality (\ref{contr}).

\medskip

Since the point $x_0$ was an arbitrary point in $\mathbb{R}^3$, this concludes the proof.

\end{proof}

\begin{remark}\label{uloc}
Note that the argument utilizing the sparseness via the harmonic measure maximum principle \emph{per se}
is completely local. It is the application of Theorem \ref{cpx} that ascribes to Theorem \ref{yay1} the uniformly local
nature. To arrive to a completely local result, it suffices to localize the result presented in Theorem \ref{cpx}; c.f., 
Remark \ref{loc}.
\end{remark}

\bigskip

\section{Spatial complexity of the vorticity components:
an a priori bound}

\medskip

Concepts of a set being $r$-mixed (or `mixed to scale $r$')
and $r$-semi-mixed appear in the study of 
mixing properties of incompressible (or nearly incompressible)
flows, and in particular in the study of the optimal mixing strategies for passive scalars 
(e.g., density of a tracer) advected
by an incompressible velocity field (see, e.g., \cite{Br} and \cite{IKX}). 

\medskip

\begin{definition}\label{semi_mixed} 
Let $r>0$.  An open set $S$ is $r$-\emph{semi-mixed} if there exists $\delta$ in $(0,1)$ 
such that
$$
\frac{m^3(S\cap B(x,r))}{m^3(B(x,r))} \leq \delta
$$ 
for every $x$ in $\mathbb{R}^3$.
If the complement of $S$ is $r$-semi-mixed as well, then $S$ is said to be 
$r$-\emph{mixed}.
\end{definition}

\medskip

\begin{remark}
Recalling the definitions from the previous section, note that if the set $S$ is $r$-semi-mixed 
(with the ratio $\delta$), then it is 3D $\delta$-sparse around every point $x_0$ in $\mathbb{R}^3$ 
at scale $r$.
\end{remark}

\medskip

The following lemma is a vector-valued version of a positive scalar-valued
lemma in \cite{IKX} where the application was to the density of a tracer;
a vectorial Besov space version was given in \cite{FGL} in the context of the
$B^{-1}_{\infty, \infty}$-regularity criterion on the velocity. We present a 
detailed proof for completeness of the exposition.

\medskip

\begin{lemma}\label{mixing_lemma}
Let  $r \in (0,1]$ and $f$ a bounded, continuous vector-valued function on $\mathbb{R}^3$.
Then, for any pair $(\lambda, \delta)$, $\lambda$ in $(0,1)$ and $\delta$ in $\bigl(\frac{1}{1+\lambda}, 1\bigr)$, 
there exists a constant 
$c^*(\lambda, \delta) > 0$ such that if 

\[
\|f\|_{H^{-1}}  \leq  c^*(\lambda, \delta) \, r^\frac{5}{2} \|f\|_\infty
\]

then each of the six super-level sets $S_\lambda^{i, \pm} = \left\{ x\in \mathbb{R}^3: f_i^{\pm}(x) > 
\lambda \|f\|_\infty \right\}$ is 
$r$-semi-mixed with the ratio $\delta$. ($H^{-1}$ denotes the dual of the Sobolev space $H^1$.)
\end{lemma}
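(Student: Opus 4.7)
The plan is to argue by contrapositive: if one of the six sets $S_\lambda^{i,\pm}$ fails to be $r$-semi-mixed with ratio $\delta$, then there exists a ball $B(x_0,r)$ in which that set occupies measure strictly greater than $\delta\, m^3(B(x_0,r))$, and I will extract from this a test vector field $\phi\in H^1$ such that $|\langle f,\phi\rangle|/\|\phi\|_{H^1}$ forces $\|f\|_{H^{-1}}$ to exceed $c^*(\lambda,\delta)\,r^{5/2}\|f\|_\infty$. Concretely, I would fix a small auxiliary parameter $\epsilon>0$ (to be tuned at the end) and take a smooth scalar cutoff $\eta$ with $0\le\eta\le 1$, $\eta\equiv 1$ on $B(x_0,r)$, supported in $B(x_0,(1+\epsilon)r)$, and $|\nabla\eta|\le C_0/(\epsilon r)$; the test field is then $\phi=\pm\eta\, e_i$, with the sign matching that of $S_\lambda^{i,\pm}$.

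For the lower bound on $|\langle f,\phi\rangle|=\int(\pm f_i)\,\eta\,dx$, I would split the integral over $A:=S_\lambda^{i,\pm}\cap B(x_0,r)$ and over its complement in the support of $\eta$. On $A$, by definition $\pm f_i>\lambda\|f\|_\infty$ and $\eta=1$, so the contribution is at least $\lambda\delta\,\|f\|_\infty\,m^3(B(x_0,r))$ by the failed semi-mixing. Elsewhere in the support one only has $\pm f_i\ge-\|f\|_\infty$ and $\eta\le 1$, losing at most $\bigl((1+\epsilon)^3-\delta\bigr)\|f\|_\infty\,m^3(B(x_0,r))$. Combining,
\[
|\langle f,\phi\rangle|\ge \bigl(\delta(1+\lambda)-(1+\epsilon)^3\bigr)\,\|f\|_\infty\,m^3(B(x_0,r)),
\]
a strictly positive multiple of $\|f\|_\infty r^3$ as soon as $\epsilon$ is chosen with $(1+\epsilon)^3<\delta(1+\lambda)$, which is possible precisely because $\delta>1/(1+\lambda)$. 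For the $H^1$-norm I would crudely bound $\|\eta\|_{L^2}^2\le C r^3$ and $\|\nabla\eta\|_{L^2}^2\le \bigl(C_0/(\epsilon r)\bigr)^2\cdot C\epsilon r^3= C' r/\epsilon$; since $r\le 1$, $r^3\le r$, and so $\|\phi\|_{H^1}\le C_\epsilon\, r^{1/2}$. Duality then yields $\|f\|_{H^{-1}}\ge|\langle f,\phi\rangle|/\|\phi\|_{H^1}\ge c^*(\lambda,\delta)\,r^{5/2}\|f\|_\infty$, contradicting the hypothesis.

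The only real balancing act is the choice of $\epsilon$: a sharper cutoff inflates $\|\nabla\eta\|_{L^2}$, while a smoother one enlarges the shell $B(x_0,(1+\epsilon)r)\setminus B(x_0,r)$ over which we have no sign control on $f_i$ and which therefore eats into the positive contribution from $A$. The hypothesis $\delta>1/(1+\lambda)$ is exactly what guarantees a finite $\epsilon$ making the two effects compatible, and it is the only place that hypothesis enters—so the substantive content of the proof is this single quantitative optimization, with all remaining estimates being routine.
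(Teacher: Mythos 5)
Your proposal is correct and follows essentially the same route as the paper: argue by contradiction from a ball where the semi-mixing ratio fails, test $f_i$ against a cutoff equal to $1$ on $B(x_0,r)$ and supported in $B(x_0,(1+\eta)r)$, bound the good set's contribution below by $\lambda\delta$ and the losses by $(1+\eta)^3-\delta$ (the paper merely splits this loss into the two pieces $B(x_0,r)\setminus S^{i,\pm}_\lambda$ and the shell), use $r\le 1$ to get $\|\phi\|_{H^1}\lesssim r^{1/2}$, and pick the dilation parameter using $\delta>\frac{1}{1+\lambda}$ exactly as you describe. The only cosmetic difference is that the paper fixes $\eta$ by the explicit midpoint rule $(1+\eta)^3=\frac{\delta(1+\lambda)+1}{2}$ rather than leaving it as an unspecified admissible choice.
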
 

\medskip

\begin{remark}

It is worth noting that there could be a significant discrepancy between semi-mixedness of the full vectorial
super-level sets and the super-level sets of the components. As a matter of fact, the former may not be
semi-mixed at all, while the latter may be $r$-semi-mixed at any predetermined scale.
In particular, a duality argument of the type utilized in the lemma does
not seem to lead to any quantifiable semi-mixedness of the full vectorial super-level sets.

\end{remark}

\medskip

\begin{proof}

Assume the opposite, i.e., there exists an index $i$ such that either 
$S^{i,+}_\lambda$ or $S^{i,-}_\lambda$ is not $r$-semi-mixed with the ratio $\delta$.
Suppose that it is $S^{i,+}_\lambda$ (if it were $S^{i,-}_\lambda$, the
only modification would be to replace the function $\phi$ below with $-\phi$).

\medskip

Consequently, 
there exists a spatial point $x_0$ such that

\[
m^3(S^{i,+}_\lambda \cap B(x_0,r)) > \delta \, V_3 \, r^3, 
\]

where $V_3$ denotes the volume of the unit ball in $\mathbb{R}^3$. 

\medskip

Let $\phi$ be an $H^1$-optimal, smooth radial (monotone) function, equal to 1 in $B(x_0,r)$, and vanishing
outside $B(x_0, (1+\eta)r)$ for some $\eta>0$ (the value to be determined). Then, by duality,

\begin{equation}\label{norm_u}
\|f\|_{H^{-1}} \geq \frac{1}{\|\phi\|_{H^1}} \abs {\int_{\mathbb{R}^3} f_i(x) \phi(x) \, dx}.
\end{equation}

\medskip

An explicit calculation of the $H^1$-norm of $\phi$ yields

\begin{equation}\label{norm_f}
\|\phi\|_{H^1}  \leq c(\eta)  \, r^\frac{1}{2}
\end{equation}

for a suitable $c(\eta)>0$ (recall that $r$ is in $(0, 1]$).

\medskip

The objective is to obtain a lower bound on the numerator by performing a suitable 
domain-decomposition of the integral.

\begin{align*}
\abs{\int_{\mathbb{R}^3} f_i(x )\phi(x)\, dx} \geq \int_{\mathbb{R}^3} f_i(x) \phi(x)\, dx
\geq I - \abs{II} - \abs{III}, 
\end{align*}

where 

\[
I = \int_{S^{i,+}_\lambda \cap B(x_0,r)} f_i(x) \phi(x)\, dx,
\]

\[
II = \int_{B(x_0,r)\backslash S^{i,+}_\lambda} f_i(x) \phi(x)\, dx
\]

and 

\[
III = \int_{\bigl(B(x_0,(1+\eta)r)\backslash B(x_0,r)\bigr)} f_i(x) \phi(x)\, dx.
\]

\medskip

It is transparent that

\begin{align}\label{I}  
I &= \int_{S^{i,+}_\lambda \cap B(x_0,r)} f_i(x) \, dx =  \int_{S^{i,+}_\lambda \cap B(x_0,r)} f_i^+ (x) \,dx \notag \\ 
&  > \lambda \, \|f\|_\infty \, m^3(S^{i,+}_\lambda \cap B(x_0, r)) \geq  \lambda \, \delta \, V_3 \, r^3 \|f\|_\infty,
\end{align}

\begin{align}\label{II}
\abs{II} & = \abs{\int_{B(x_0,r) \backslash S^{i,+}_\lambda} f_i(x)\ , dx} \leq \|f\|_\infty \left( m^3(B(x_0,r) - m^3(S^{i,+}_\lambda \cap B(x_0,r))\right)\notag \\
& \leq \|f\|_\infty \left(V_3 \, r^3 - \delta \, V_3 \, r^3\right) \notag \\
& = \left(1 - \delta \right) \, V_3 \, r^3 \|f\|_\infty
\end{align}

and 

\begin{align}\label{III}
\abs{III} & \leq \abs{\int_{(B(x_0,(1+\eta)r)\backslash B(x_0,r))} f_i(x) \, dx}\notag \\
& \leq \|f\|_\infty \left( m^3(B(x_0,(1+\eta)r) - m^3(B(x_0,r))\right)\notag \\
& \leq \left((1+\eta)^3 - 1\right) \, V_3 \, r^3 \, \|f\|_\infty. 
\end{align}

\medskip

Collecting the estimates \eqref{norm_u}, \eqref{norm_f} and \eqref{I}--\eqref{III}, it follows that

\begin{align*}
\|f\|_{H^{-1}} > c^*(\eta) \, r^\frac{5}{2} \, \|f\|_\infty \,
(\lambda \delta + \delta  - (1+\eta)^3). 
\end{align*}

\medskip

Since $\delta > \frac{1}{1+\lambda}$ is postulated, the equation
$(1+\eta)^3 = \frac{\delta (1+\lambda) +1}{2}$ has a unique solution
$\eta=\eta(\lambda, \delta)$; this choice of $\eta$ yields
 
\begin{align}
\|f\|_{H^{-1}} > c^*(\lambda, \delta) \, r^\frac{5}{2} \, \|f\|_\infty
\end{align}

with $c^*(\lambda, \delta) = c^*(\eta) \, \frac{\delta (1+\lambda) -1}{2}$ (which is positive since
$\delta > \frac{1}{1+\lambda}$). This produces a contradiction.

\end{proof}

\medskip

The next theorem is a simple consequence of the above lemma and a careful choice of
parameters throughout the paper; it is designated a theorem
because of its significance.

\medskip

\begin{theorem}\label{yay2}
Let $u$ be a Leray solution (a global-in-time weak solution satisfying the global energy inequality),
and assume that $\omega$ is in $C\bigl( (0, T^*), L^\infty \bigr)$ for some $T^* > 0$.
Then
for any $\tau$ in $(0, T^*)$ for which the scale $r^*$ defined below is less or equal to one, 
the super-level sets

\[
 V^{j, \pm}_\tau=\biggl\{x \in \mathbb{R}^3: \, \omega_j^\pm(x,\tau) > \frac{1}{2M}\|\omega(\tau)\|_\infty\biggr\} 
\]

are 3D $\frac{3}{4}$-sparse around any spatial point $x_0$ at scale 

\[
 r^* =  c( \|u_0\|_2) \, \frac{1}{\|\omega(\tau)\|_\infty^\frac{2}{5}}
\]

where $c( \|u_0\|_2)$ is a constant depending only on the energy at time 0
($M$ is the parameter set preceding the statement of Theorem \ref{yay1}; recall
that $1 < M < \frac{3}{2}$). 

\end{theorem}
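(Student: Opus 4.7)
The proof will be a direct application of Lemma \ref{mixing_lemma} to $f = \omega(\tau)$, with a careful choice of parameters dictated by the constant $M$ fixed just before Theorem \ref{yay1}. The plan is to first match the parameters $(\lambda,\delta)$ of the lemma to the geometric hypothesis of the theorem, namely $\lambda = \frac{1}{2M}$ (so that the super-level set in the lemma agrees with $V^{j,\pm}_\tau$) and $\delta = \frac{3}{4}$. Before invoking the lemma one has to verify the admissibility condition $\delta > \frac{1}{1+\lambda}$; this amounts to $\frac{3}{4} > \frac{2M}{2M+1}$, i.e., $M < \frac{3}{2}$, which is exactly what the setup preceding Theorem \ref{yay1} guarantees (the constant $M$ was designed to lie in $(1,\tfrac{3}{2})$).

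Next I would produce the required $H^{-1}$ bound on the vorticity. Since $\omega=\mathrm{curl}\,u$, the map $u\mapsto\omega$ loses one derivative, so
\[
\|\omega(\tau)\|_{H^{-1}} \leq \|u(\tau)\|_{L^2}.
\]
For a Leray solution the global energy inequality then gives $\|u(\tau)\|_{L^2}\leq\|u_0\|_{L^2}$, so $\|\omega(\tau)\|_{H^{-1}}$ is controlled purely in terms of the initial energy, uniformly in $\tau$.

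The last step is to choose the scale $r^*$ to saturate the hypothesis of the lemma. Setting
\[
\|u_0\|_2 \leq c^*\!\left(\tfrac{1}{2M},\tfrac{3}{4}\right)\,(r^*)^{5/2}\,\|\omega(\tau)\|_\infty,
\]
and solving for $r^*$ produces exactly
\[
r^* = c(\|u_0\|_2)\,\frac{1}{\|\omega(\tau)\|_\infty^{2/5}},
\]
where the exponent $\tfrac{2}{5}$ is the algebraic content of the theorem and comes directly from the $r^{5/2}$ factor in the lemma (which itself originates in the $H^1$-norm scaling of the radial cut-off $\phi$ used in the duality argument). The auxiliary hypothesis $r^*\leq 1$ in the theorem is inserted precisely to meet the $r\in(0,1]$ assumption of Lemma \ref{mixing_lemma}. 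The lemma then concludes that each $V^{j,\pm}_\tau$ is $r^*$-semi-mixed with ratio $\tfrac{3}{4}$, and semi-mixedness is, by definition, exactly 3D $\tfrac{3}{4}$-sparseness around every spatial point at scale $r^*$.

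In terms of difficulty, there is no real obstacle: all the analytic work has been done in Lemma \ref{mixing_lemma}. The only care required is in bookkeeping the parameter constraint $\delta>\frac{1}{1+\lambda}$ against the specific values of $M$ and $\lambda=\frac{1}{2M}$ fixed earlier — this is where the inequality $M<\frac{3}{2}$ from the definition of $M$ is crucial, and it is the reason the parameters were pinned down precisely as they were at the end of Section 3.
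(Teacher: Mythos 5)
Your proposal is correct and follows essentially the same route as the paper: verify the admissibility condition $\frac{3}{4} > \frac{1}{1+\frac{1}{2M}}$ using $M < \frac{3}{2}$, bound $\|\omega(\tau)\|_{H^{-1}}$ by the kinetic energy (the paper does this explicitly via Plancherel on the components $\partial_i u_j$, which is the same ``curl loses one derivative'' observation), and then solve the hypothesis of Lemma \ref{mixing_lemma} for $r$, with semi-mixedness translating into 3D sparseness around every point. No gaps.
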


\begin{proof}

Notice that our choice of parameters implies that 

\[
 \frac{3}{4} > \frac{1}{1 + \frac{1}{2M}},
\]

and Lemma \ref{mixing_lemma} is applicable.

\medskip

Since we are on the whole space, an efficient way to estimate $\|\omega(\tau)\|_{H^{-1}}$
is by switching to the Fourier space where one is required to estimate integrals of the form

\[
I_{i,j}  =  \int \frac{1}{1+|\xi|^2} \, |\widehat{\partial_i u_j} (\xi, \tau)|^2 \, d \xi.
\]

This is plain since

\[
 I_{i,j} \le  \int \frac{|\xi_i|^2}{1+|\xi|^2} \, |\widehat{u_j} (\xi, \tau)|^2 \, d \xi \le \|\widehat{u(\tau)}\|_2^2 
 = \|u(\tau)\|_2^2 \le \|u_0\|_2^2
\]

(by the energy inequality).

\medskip

Consequently, in order to satisfy all the assumptions in the lemma, it suffices to postulate

\[
 c \, \|u_0\|_2 \le c^*\biggl(\frac{1}{1+\frac{1}{2M}}, \frac{3}{4}\biggr) \, r^\frac{5}{2} \, \|\omega(\tau)\|_\infty,
\]

which forces the choice of the scale of sparseness $r^*$ in the theorem.

\end{proof}

\medskip

\begin{remark}
For our purposes, the restriction $r^* \le 1$ in the theorem is irrelevant since we are only interested in
temporal points $\tau$ leading to a possible blow-up time. It is also more of a feature than a bug since
the formation of small scales is a characteristic of the fully nonlinear regime.
\end{remark}

\medskip

\begin{remark}
Some effort has been made to assure that both the cut-off levels for the super-level sets and
the ratios of sparseness in Theorem \ref{yay1} and Theorem \ref{yay2} are identical,
$\lambda = \frac{1}{2M}$ and $\delta = \frac{3}{4}$.
Moreover, since the constants featured
in either of the two scales of sparseness
are either absolute (after our choice of parameters was made) constants or 
the constant depending only on the initial energy of the solution, the tolerance parameter
$c_0$ depends only on the initial energy. Shortly, the regularity condition
is given in the class $Z_\frac{1}{2}\bigl(\frac{1}{2M}, \frac{3}{4}; c_0(\|u_0\|_2)\bigr)$, and
the \emph{a priori} bound in the class
$Z_\frac{2}{5}\bigl(\frac{1}{2M}, \frac{3}{4}; c_0(\|u_0\|_2)\bigr)$.
\end{remark}

\bigskip

\section{Epilogue}

\medskip

The authors' goal was to present a mathematical framework--the scale of
classes $Z_\alpha$--in which one could break the archetypal 3D NS scaling barrier
in the context of a blow-up-type argument.
More precisely--in this ambiance--we showed that the \emph{a priori} bound
can be shifted by an algebraic factor, from $Z_\frac{1}{3}$,
which corresponds to the classical energy-level \emph{a priori} bounds, to $Z_\frac{2}{5}$;
recall that the
regularity class in this setting is $Z_\frac{1}{2}$, which corresponds to the
classical regularity criteria, and can be viewed as a statement that the
radius of spatial analyticity is a \emph{bona fide} diffusion scale.

\medskip

Parallel to the continued mathematical efforts, there is an effort directed at
gaining some insight into a `typical value' of $\alpha$ by performing
fully resolved computational simulations of turbulent flows, as well as harvesting data from
the Johns Hopkins Turbulence Databases (JHTDB), and trying to identify
the scaling properties of the scale of sparseness of the super-level sets of the
positive and negative parts of the vorticity components with respect to the
diffusion scale $\frac{\nu^\frac{1}{2}}{\|\omega\|_\infty^\frac{1}{2}}$. This
has been a joint project with the complexity group of J. Mathiesen at the Niels Bohr 
Institute (NBI) in Copenhagen, and in particular, with M. Mizstal, and the ScienceAtHome
group at Aarhus University led by J. Sherson; J. Rafner, of both the NBI and Aarhus 
is the project coordinator.

\medskip

The simulations performed and the databases utilized cover both decaying turbulence (e.g., initialized with the Kida
vortex or with the frequency-localized noise) and forced isotropic turbulence; in all simulations/databases,
the spatial domain is a periodic box.

\medskip

The data have been collected and analyzed in three different regimes: deeply in the inertial range,
deeply in the dissipation range, and in the regime of the most interest in the studies of possible singularity
formation in the 3D NSE, the regime of transition between the two. 

\medskip

The results will be detailed in a separate publication. Here, we would like to report that there are indications of
a statistically significant dependence between the two scales (the geometric scale of sparseness and 
the analytic diffusion scale) in all three regimes; however, the transitional regime has been the one signified by 
the strong evidence of a power law dependence with the range of exponents implying that the 
solution in view stabilizes in a 
class $Z_\alpha$, for some $\alpha > \frac{1}{2}$,
indicating no obstruction to furthering the rigorous $Z_\alpha$-theory presented here. Figure 2. below provides
an example of this phenomenon.

\begin{center}
\begin{figure}[htbp]
\includegraphics[scale=.86]{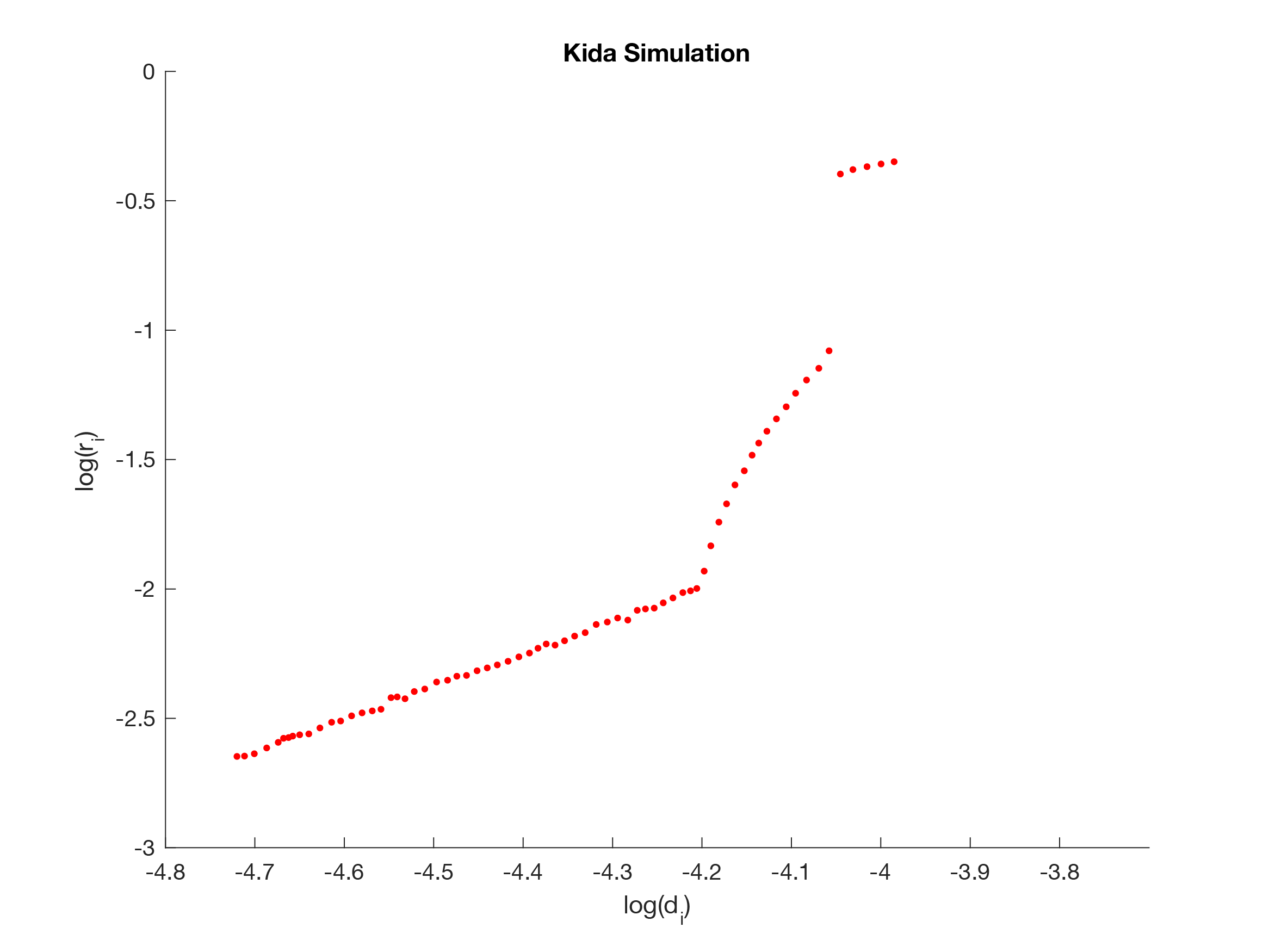}
\caption{\small{Log-log plot of the scale of 3D sparseness (of the vorticity components) $r_i=r_i(t_i)$ \emph{vs.} the diffusion scale
$d_i = \frac{\nu^\frac{1}{2}}{\|\omega(t_i)\|_\infty^\frac{1}{2}}$ from a simulation 
initialized with the Kida vortex (\cite{Ki}) initial condition at the Reynolds number of approximately $10^4$. Originating the flow at
the Kida vortex initial condition causes $\|\omega(t)\|_\infty$ to increase sharply before reaching the maximum 
and slumping, i.e., it produces a burst of $\|\omega(t)\|_\infty$ which is as close to `modeling a singularity' 
in the 3D NS flows as feasible. In the plot, the time runs from right to left; (approximately) the first third of 
the time-slices are sourced from the initial time-interval in which the flow still has `too much memory' of the initial condition, 
and the last two thirds of the time slices are sourced from the time-interval leading to
the peak. As illustrated in the plot--in the time-interval leading to the peak--a power-type relation of the form
$r \approx d^\alpha$ settles in; after proper rescaling, it is revealed that the slope of the line is approximately
equal to $\frac{6}{5}$, i.e., $r \approx d^\frac{6}{5}$. This implies that the solution in view 
(leading to the peak) is approximately in $Z_\frac{3}{5}$. Recall that
the energy-level bound is $Z_\frac{1}{3}$, our \emph{a priori} bound is $Z_\frac{2}{5}$, and what is needed for our
no blow-up criterion is at least $Z_\frac{1}{2}$. Hence, the plot indicates that further improvements of the
$Z_\alpha$ \emph{a priori} bounds--even past the critical class $Z_\frac{1}{2}$--might indeed be possible
(courtesy of M. Mizstal, NBI and J. Rafner, NBI and Aarhus).}}
 \end{figure}
 \end{center}

\newpage

\bigskip

\centerline{\textbf{Acknowledgments}}

\medskip

The work of A.F. is supported in part by the National Science Foundation grant DMS-1418911. The work
of Z.G. is supported in part by the National Science Foundation grant DMS - 1515805 and the
Lundbeck Foundation grant R217-2016-446 (a collaborative grant with the NBI and
Aarhus). Z.G. thanks the Program in Applied and Computational
Mathematics at Princeton for their hospitality in Spring 2017 when the paper was finalized. The authors
thank the referee for the constructive comments.

\bigskip

\end{document}